\theoremstyle{theorem}
\newtheorem{thm}{Theorem}[section]
\newtheorem{lem}[thm]{Lemma}
\newtheorem{cor}[thm]{Corollary}
\theoremstyle{remark}
\theoremstyle{definition}
\numberwithin{equation}{section}
\def\P{{\mathbb{P}}}
\def\R{{\mathbb{R}}}
\newcommand{\ind}{\mathbbm{1}}
\newcommand{\EE}{\mathbb{E}}
\newcommand{\PP}{\mathbb{P}}
\newcommand{\E}{\mathbb{E}}
\renewcommand{\P}{\mathbb{P}}
\newcommand{\NN}{\mathbb{N}}
\newcommand{\N}{\mathbb{N}}
\newcommand{\T}{\mathbb{T}}
\newcommand{\RR}{\mathbb{R}}
\newcommand{\Fcal}{\mathcal{F}}
\newcommand{\eps}{\varepsilon}
\newcommand{\ud}{\mathrm{d}}
\newcommand{\8}{\infty}
\renewcommand{\a}{\alpha}
\newcommand{\Ps}{\PP_{\alpha}^{\uparrow}}
\begin{document}

	\title{Local fluctuations of critical Mandelbrot cascades}
%	\date{\today}

	\author[D. Buraczewski, P. Dyszewski and K. Kolesko]{Dariusz Buraczewski, Piotr Dyszewski and Konrad Kolesko}

\address{Dariusz Buraczewski, Mathematical Institute, University of
Wroc{\l}aw, Plac Grunwaldzki 2/4, 50-384 Wroc{\l}aw, Poland}
	\email{dbura@math.uni.wroc.pl}

\address{Piotr Dyszewski, Mathematical Institute, University of
Wroc{\l}aw, Plac Grunwaldzki 2/4, 50-384 Wroc{\l}aw, Poland}
	\email{pdysz@math.uni.wroc.pl}

\address{Konrad Kolesko, Mathematical Institute, University of
Wroc{\l}aw, Plac Grunwaldzki 2/4, 50-384 Wroc{\l}aw, Poland \newline 
Institut f\"ur Mathematik, Universit\"at Innsbruck, 6060 Innsbruck, Austria. }
	\email{kolesko@math.uni.wroc.pl}

	\thanks{The research was partially supported by the National Science Center, Poland (Sonata Bis, grant number DEC-2014/14/E/ST1/00588)}

	\keywords{Mandelbrot cascades, branching random walk, derivative martingale, conditioned random walk}
	\subjclass[2010]{60J80, 60G57}

	\begin{abstract}
		We investigate so-called generalized Mandelbrot cascades at the freezing (critical) temperature. It is known that, after a proper rescaling,
		a~sequence of multiplicative cascades converges weakly to some continuous random measure.  Our main question
		is how  the limiting measure $\mu$ fluctuates. For any given point $x$, denoting by  $B_n(x)$  the ball of radius $2^{-n}$ centered around $x$,
		we present optimal lower and upper estimates of  $\mu(B_n(x))$ as $n \to \infty$.
	\end{abstract}

	\maketitle

\section{Introduction}\subsection{Mandelbrot cascades}
\!\!\footnote{In order to keep the introduction as accessible as possible, we omit some technical subtleties. This way we focus on the general overview and the contribution of this paper.
We  postpone the proper introduction of the setting and notation to the second section.}
In the seventies Mandelbrot~\cite{mandelbrot1972, mandelbrot1999} proposed a model of~random multiplicative cascade measures, to simulate the energy dissipation in intermittent turbulence. Mandelbrot cascades exhibited a number of fractal  and statistical features observed experimentally in a~turbulence flow. Up to now, through various applications, this model  found its way into a~wide range of scientific fields from financial mathematics \cite{borland2005} to  quantum gravity and disordered systems in mathematical physics \cite{Barral:jin}.  Mathematically, a multiplicative cascade, is a~measure-valued stochastic process and was first rigorously described  by Kahane and Peyri\`{e}re~\cite{kahane:peyriere:1976}. They presented a complete proof of results announced by Mandelbrot, answering e.g. the questions of non-degeneracy, existence of moments and local properties. Since then multiplicative cascades become a subject of study for  numerous mathematicians, see e.g. \cite{barral:mandelbrot:1,barral:mandelbrot:2,BDGM,durrett:liggett,Liu2000}.

One of the simplest examples of  multiplicative cascades can be expressed as a sequence of random measures  on the unit interval $I=[0,1)$. They depend on two parameters: a~real number
$\beta>0$ (inverse temperature parameter) and a~real valued random variable $\xi$ (fluctuations). For convenience, we assume that $\xi$ is  normalized, i.e.
\begin{equation}\label{eq: a}
	\E e^{\xi} = \frac 12 \qquad \mbox{ and }\qquad \E \big[\xi e^{\xi}\big] = 0.
\end{equation}
To define the cascade measures, consider  an~infinite dyadic Ulam-Harris tree denoted by $\mathbb{T}_2 = \bigcup_{n \geq 0} \{0,1\}^n$ and attach to every edge, connecting $x$ with $xk$ ($x\in \mathbb{T}_2$, $k\in\{0,1\}$), a~random weight $\xi_k(x)$, being an independent copy of $\xi$. Let $V(x)$ be the total weight of the branch from the root to $x$ obtained by
adding weights of the edges along this path. Define the measure $\mu_{\beta,n}$ on the unit interval $I$  as an absolutely continuous with respect to the Lebesgue measure with
Radon-Nikodnym derivative constant on the set $I_x$,  such that the measure of set $I_x$ is equal to
\begin{equation*}
	\mu_{\beta, n}(I_x) = e^{-\beta V(x)},
\end{equation*}
where by $I_x$ we denote the dyadic interval coded by $x =(x_1,\ldots,x_n)\in \mathbb{T}_2$ such that $|x|=n$, i.e. $I_x = \big[\sum_{k=1}^nx_k 2^{-k},\sum_{k=1}^nx_k 2^{-k} + 2^{-n}\big)$.

After a normalization by proper a~deterministic sequence, say $c_{\beta,n}$, one obtains  measures $c_{\beta,n}\mu_{\beta,n}$ converging towards a finite nonzero random measure
$\mu_{\beta}$ on $I$. Essentially, due to self-similarity of the model,  asymptotic behavior of $\mu_{\beta,n}$ boils down to the asymptotic behavior of  its total mass, i.e.
\begin{equation*}
	Z_{\beta,n}=\mu_{\beta,n}(I) = \sum_{|x|=n}e^{-\beta V(x)}.
\end{equation*}

Derrida and Spohn~\cite{derrida1988} explained that behavior of the cascade  depends mainly on the~parameter $\beta$ and that there is a~phase transition in the behavior of the limiting measure. Under \eqref{eq: a} the critical value of parameter $\beta$ is 1. For $\beta < 1$ (high temperature) and $\beta = 1$ (freezing temperature) the limiting measure $\mu_{\beta}$
is continuous, although singular with respect to the Lebesgue measure, whereas for $\beta>1$ (low temperature) is purely atomic. In~the continuous case one of the fundamental problems is description of local behavior of the measure $\mu_{\beta}$, e.g. fluctuations of $\mu_{\beta}$, which is the main problem considered in this paper. More precisely we aim to find optimal,
deterministic functions $\phi_1$ and $\phi_2$ such that for $\mu_{\beta}$-almost all $x \in I$ and for 	sufficiently large $n$
we have  almost surely (a.s.)
\begin{equation*}
	\phi_1(n) \leq \mu_{\beta}(B_n(x)) \leq \phi_2(n),
\end{equation*}
where $B_n(x)$ is the dyadic set of length $2^{-n}$ containing $x$.

\subsection{The subcritical case}

If $\beta<1$ we say that the system is in the subcritical case or high
temperature case. In this setting, result of
Kahane and Peyri\`{e}re \cite{kahane:peyriere:1976}	 ensures that under some mild integrability assumptions
\begin{equation*}
	\big(\E Z_{\beta,n})^{-1} Z_{\beta,n} \to Z_{\beta} \quad \mbox{a.s.}
\end{equation*}
where $Z_{\beta}$ is a.s. positive and finite. Therefore one may infer, that for any fixed $x \in \mathbb{T}_2$, as $n \to \infty$
\begin{equation*}
	\big(\E Z_{\beta,n})^{-1} \mu_{\beta,n}(I_x) \to \mu_{\beta}(I_x) \qquad \mbox{a.s.}
\end{equation*}
the details are given in Section 2. Local fluctuations of $\mu_{\beta}$ were described by  Liu~\cite{Liu2000}, who proved that for any $\varepsilon>0$ for $\mu_{\beta}$-almost any $x\in I$ and for sufficiently large $n$
\begin{equation*}
	e^{-(a+\varepsilon)n } \leq \mu_{\beta}(B_n(x)) \leq e^{-(a-\varepsilon)n },
\end{equation*}
for some constants	 $a>0$ depending on $\beta$ and $\xi$. Liu~\cite{Liu2000} proved this estimates for generalized model, defined in Section 2.

\subsection{The critical case}

Whenever $\beta=1$, we say that the system is in the critical case, or~in the  freezing temperature. This is the main case considered in this article. The situation is more involved than in the subcritical case since in the latter, the limit $Z_{\beta}$ emerged as  that of
the~ positive martingale $(\E Z_{\beta,n})^{-1} Z_{\beta,n}$. Thus the proper choice of normalizing $c_{\beta,n}$ was natural. It turns out that in the critical case, this limit vanishes, showing that a~different scaling is needed in order to obtain a~nontrivial limit.
The~solution to this problem was recently delivered by A\"id\'ekon and Shi~\cite{aidekon2014} yielding
\begin{equation*}
	\sqrt{n} Z_{1,n} \to Z_{1} \quad \mbox{in probability}
\end{equation*}
with a.s. finite and positive $Z_{1}$. This convergence cannot be improved to a.s. convergence, since $\limsup_{n \to \infty}\sqrt{n}Z_{1,n}=\infty$ a.s. as is also proved in~\cite{aidekon2014}. From the convergence in probability however, we obtain that, as $n \to \infty$
\begin{equation*}
	\sqrt{n}\mu_{1,n}(I_x) \to \mu_{1}(I_x) \qquad \mbox{in probability.}
\end{equation*}
Barral~et~al.~\cite{barral2014} proved that $\mu_1$ is atomless and considered the problem of fluctuations. Under an additional assumption that $\xi$ is  Gaussian, it was proved that for certain $c>0$ and arbitrary $k>0$, with probability one for $\mu_{1}$-almost any
$x\in I$, for sufficiently large $n$
\begin{equation*}
	e^{-c\sqrt{n\log n}} \leq \mu_{1}(B_n(x)) \leq e^{-k\log n}.
\end{equation*}
As the main results of this article shows, these bounds are not optimal and can be improved to the estimates of the form
\begin{equation*}
	e^{-d\sqrt{n\log\log n}} \leq \mu_{1}(B_n(x)) \leq e^{-\sqrt{n}L(n)},
\end{equation*}
for some slowly varying function $L$ with $L(n)\to 0$ as $n \to \infty$. Moreover these estimates are valid for general, not necessary Gaussian, random variable $\xi$. Our results show also that these bounds are precise and give a detailed description
of the lower and upper time-space envelope of $\log\mu_{1}(B_n(x))$. Roughly speaking, we will show that it satisfies the same bounds as the sequence $e^{-V(x_n)}$, where $x_n$ is  a vertex of $n$th generation chosen at random in a way that will allow us to describe the behavior of $V(x_n)$.   For details see the discussion in Section~\ref{sec: results}.

\subsection{The supercritical case}

Situation when $\beta>1$ is referred to as the supercritical case or ''glassy'' low temperature phase. In this case, the asymptotic behavior of $Z_{\beta,n}$ is determined by the minima of $V(x)$ for $|x|=n$. Using the work of A\"id\'ekon~\cite{aidekon2013}, giving the weak
convergence of $\min_{|x|=n}V(x)  - 3/2 \log n$, Madaule~\cite{madaule2015} was able to prove that, as~$n \to \infty$
\begin{equation*}
	n^{\frac{3\beta}{2}}Z_{\beta,n} \to Z_{\beta} \quad \mbox{in distribution.}
\end{equation*}	
Whence we may infer that for $\beta>1$
\begin{equation*}
	n^{\frac{3\beta}{2}} \mu_{\beta,n}(I_x) \to \mu_{\beta}(I_x) \qquad \mbox{in distribution.}
\end{equation*}
However, as mentioned before, the limiting measure $\mu_{\beta}$ is purely atomic, see Barral~et~al.~\cite{barral:rhodes:vargas}.

\subsection{Liouville quantum gravity and Liouville Brownian motion} Finally, let us also mention that there is a natural counterpart of the measures $\mu_{\beta}$ in a continuous setting. Roughly speaking, these are random measures on a given domain $D$ in $\R^d$ with the following formal definition
$$\mu_{\gamma}=e^{\gamma X(x)-\gamma^2/2\EE[X(x)^2]}\sigma(dx),$$
where $X$ is a centered Gaussian random field with the property that $\operatorname{Cov}(X(x),X(y))=-\log\|x-y\|+O(1)$, as $\| x-y\| \to 0$ and $\sigma$ is a given Radon measure. Since $X$  cannot be defined pointwise even the existence of such a measure is far from being trivial (cf. \cite{kahane1985chaos, Berestycki2015}). In the particular case, when $X$ is a Gaussian free field with appropriate boundary condition these measures are important objects in theoretical physic because of the connections with the theory of Liouville quantum gravity. It is also worth to point out that Liouville quantum gravity is conjecturally related to discrete and continuum random surfaces. Roughly speaking, it appears as the scaling limit of a large class of statistical physical models (see for instance \cite{Rhodes2015} for further details and references therein).

Recently, Berestycki \cite{berestycki2015diffusion}  and  independently Garban, Rhodes and Vargas \cite{LBM, Rhodes2015} have constructed a diffusion on $D$, called Liouville Brownian motion, that has $\mu$ as a stationary measure. This process is conjectured to be the scaling limit of random walk on large planar maps. Finally, let as also mention the relationship between Liouville Brownian motion and the decay of the measure $\mu$: if  ${\bf p}_t(x,y)$ is its  transition probability  then it is believed that
$${\bf p}_t(x,y){\sim}\frac{e^{-\mu(B(x,|x-y|))/t}}{t}\qquad\text{as }|x-y|\to0.$$
See \cite{Rhodes2015} for further discussion and details.

\section{Generalized Mandelbrot cascades and main results}\label{sec: results}

\subsection{Generalized Mandelbrot cascades}	

The main aim of this article is to study asymptotic properties of the limiting measure. Since only the values of the measure are of  interest to us, we can regard the cascades as measures on some abstract space. This leads to so-called generalized Mandelbrot cascades
defined in the next few paragraphs.\\

Consider a one-dimensional, discrete-time branching random walk  governed by a~point process $\Theta$. We start with single particle placed at the origin of the real line. At time $n=1$ this particle dies while giving birth to a random number of particles which
will now form the first generation. Their position with respect to the birth place is determined by the~point process $\Theta$. At time $n=2$ each particle of the first generation, independently from the rest, dies while giving birth to the particles from the second generation.
Their positions, with respect to the birth place are determined by the same distribution $\Theta$. The system goes on according to this rules. Obviously the number of particles in each generation forms a~Galton-Watson process. We  denote the corresponding random tree
rooted at $\emptyset$ by $\mathbb{T}\subseteq \mathbb{U}$, where
\begin{equation*}
	\mathbb{U} = \bigcup_{k \geq 0} \NN^k.
\end{equation*}
We  write $|x| = n$ if $ x \in \NN^n$,  that is if $x$ is a particle at $n$th generation. We  denote the~positions of particles of the $n$th generation as $(V(x) \: | \: |x|=n)$, and the whole process as $(V(x) \: | \: x \in \mathbb{T})$. This process is usually referred
to as a branching random walk. For any vertices $x,y \in \mathbb{T}$, by $\llbracket x , y \rrbracket$ we  denote the shortest path connecting $x$ and $y$. We can partially order $\mathbb{T}$ by letting $x \geq y$ if $y \in \llbracket \emptyset, x \rrbracket$, that is if
$y$ is an~ancestor of $x$.  Let $x \wedge y = \inf \{ x , y\}$ be the oldest common ancestor of $x$ and $y$. For $x\in \T$ we introduce the set  of children of $x$ by 
$$
	C(x)=\{y\geq x\: | \:|y|=|x|+1\}
$$ and the set  of siblings of $x$ by $$\Omega(x)=\{y\: | \: |y|=|x|,\ |x \wedge y|=|x|-1 \}.$$ Finally, $x_i$  denotes the vertex in $\llbracket \emptyset, x \rrbracket$ such that $|x_i| = i$. The~branching random walk   gives  rise to a random measure on the boundary of $\mathbb{T}$, i.e.
\begin{equation*}
	\partial \mathbb{T} = \{ \xi \in  \mathbb{I}| \xi_n \in \mathbb{T}, \: n \in \NN \},
\end{equation*}
where $\mathbb{I} = \NN^{\NN}$.  For $\xi \in \mathbb{I}$ denote the truncation $\xi_n = \xi_{|\{1, \ldots , n  \}}$ and write $\xi>x$ for $x \in \mathbb{T}$ whenever $\xi_n=x$ for some $n\in \NN$. Notice that $\partial\mathbb{T}$ forms an ultrametric space with
\begin{equation*}
	B(x) = \{ \xi \in \partial \mathbb{T} \: | \: \xi>x \}, \quad x \in \mathbb{T}
\end{equation*}
as its topological basis. This corresponds to the choice of $d_c(x,y) = c^{-|x\wedge y|}$ with $c>1$ as a metric on $\partial \mathbb{T}$ in which $B(x)$ is a~ball of radius $c^{-|x|}$.

Observe that in a very special case $\Theta = \delta_{\xi_0}+\delta_{\xi_1}$, where $\xi_0$ and $\xi_1$ are iid and satisfy~\eqref{eq: a} the model reduces to the Mandelbrot cascade defined in the Introduction. Indeed, every particle has exactly two children, i.e. ${\mathbb{T}} = {\mathbb{T}}_2$ and  the intervals $I_x\subseteq I$ correspond to the balls $B(x)\subseteq \partial \mathbb{T}$.
However, below we work in full generality, when the number of children, the corresponding tree  $\mathbb{T}$ and its boundary $\partial\mathbb{T}$ are random.

\subsection{Assumptions and basic properties}

In this paper we work under a standard assumption that the branching random walk is in the so-called boundary (or critical) case, that is
\begin{equation}\label{eq:boundarycase}
	\EE\bigg[\sum_{|x|=1}{e^{-V(x)}}\bigg]=1 \qquad \mbox{and}\qquad \EE\bigg[\sum_{|x|=1}{V(x)e^{-V(x)}}\bigg]=0
\end{equation}
which boils down to \eqref{eq: a} if each particle has exactly two children. Throughout the paper we  use the convention that $\sum_{\emptyset} = 0$.  We need also some additional integrability assumptions, that is
\begin{equation}\label{eq:sigma}
	\sigma^2=\EE\bigg[ \sum_{|x|=1} V(x)^2e^{-V(x)}\bigg] <\infty
\end{equation}
as well as for some $p >2$,
\begin{equation}\label{eq:integrability}
	\EE\left[L(\log^+L)^p \right]<\infty,
\end{equation}
where $L=\sum_{|x|=1}(1+V^+(x))e^{-V(x)}$. Most of the discussions in this paper  become trivial if the system dies out, that is if $\mathbb{T}$ is finite. For example, by our definition $\partial \mathbb{T} = \emptyset$ for finite $\mathbb{T}$. Whence we assume
that the~underlying Galton-Watson process is supercritical, i.~e. $\EE[\sum_{|x|=1}1]>1$, so that the system survives with positive probability. Notice that this also implies the branching random walk is not reduced to a classical random walk, more precisely that the number
of offspring $\# \Theta$ is bigger than $1$ with positive probability.  To~avoid the need of considering the degenerate case we introduce the conditional probability
$$
	\P^*[\;\cdot\;] = \P[\;\cdot\;|{\mbox{nonextinction}}].
$$
Our main results will be formulated in terms of the measure $\P^*$. We will now focus on the~definition of the~measures $\mu_n$ and $\mu$ starting with defining the~total mass of the former via
\begin{equation*}
	 \mu_n(\partial \mathbb{T})= \sum_{|x|=n}{e^{-V(x)}}.
\end{equation*}
It can be easily shown, that thanks to the first condition in \eqref{eq:boundarycase} this sequence forms a~nonnegative, mean one martingale with respect to $\Fcal_n = \sigma(V(x) \: | \: |x|\leq n)$ (called the~\textit{additive martingale}), and whence is convergent a.s. It turns out that our second assumption in \eqref{eq:boundarycase} implies that the corresponding limit is $0$ (see for example Biggins~\cite{biggins1977}). Nevertheless A\"id\'ekon and Shi~\cite{aidekon2014} proved that, under
hypotheses~\eqref{eq:boundarycase}, \eqref{eq:sigma} and \eqref{eq:integrability}, we have the convergence
\begin{equation} \label{eq: aidekon shi}
	\sqrt{n}\mu_{n}(\partial \mathbb{T}) \to \mu(\partial \mathbb{T})  \qquad \mbox{  in probability}.
\end{equation}
Moreover, $\PP^*[\mu(\partial \mathbb{T}) > 0]=1$. This result holds true and was proven under slightly weaker assumptions than~\eqref{eq:integrability}. Since our main result requires~\eqref{eq:integrability}, we will continue to invoke other results with slightly stronger conditions for readers convenience.

Similarly, to define $\mu_n(B(y))$ for $y \in \mathbb{T}$, just truncate the additive martingale to the subtree of all branches containing $y$, that is
\begin{equation*}
	\mu_{n}(B(y))=\sum_{|x|=n, x>y} e^{-V(x)} = e^{-V(y)} \sum_{|x|=n, x>y} e^{-(V(x)-V(y))}
\end{equation*}
and by another appeal to \eqref{eq: aidekon shi} we infer that, as $n \to \infty$
\begin{equation*}
	\sqrt{n}\sum_{|x|=n, x>y} e^{-(V(x)-V(y))} \to W_y \quad \mbox{in probability}
\end{equation*}
for some nonnegative $W_y$. Whence, by  defining $\mu(B(y))$ as the limit in probability of $\sqrt{n}\mu_n(B(y))$, we get
\begin{equation*}
	\mu(B(y))= e^{-V(y)}W_y.
\end{equation*}
It can be easily verified that almost surely
\begin{equation*}
	\mu(\partial \mathbb{T}) =\sum_{|x|=1} e^{-V(x)} W_x.
\end{equation*}
Analogously, for any $y \in \mathbb{T}$ and $n>|y|$
\begin{equation*}
	W_{y} =\sum_{|x|=n,\: x>y} e^{-(V(x)-V(y)) } W_x.
\end{equation*}
Note that this means exactly that
\begin{equation*}
	\mu\bigg( \bigcup_{|x|=n, x>y} B(x) \bigg)=\mu(B(y)) = \sum_{ |x|=n, x>y }\mu(B(x))
\end{equation*}
since $B(y) = \bigcup_{|x|=n, x>y}B(x)$. By the extension theorem $\mu$ can be uniquely extended to a~measure on $\partial \mathbb{T}$.

\subsection{Main results} \label{sec:mainresults}

For functions $f,g \colon \NN \to \RR$ we write $f(n) \leq_{i.o.} g(n)$ if $f(n) \leq g(n)$ for infinitely many $n$ and $f(n) \leq_{a.a.} g(n)$ if $f(n) \leq g(n) $ for all but finitely many $n$.
We want to find deterministic functions $\phi_1,\phi_2 \colon \NN \to \RR$ such that
\begin{equation*}
	\phi_1(n) \leq_{a.a.} \mu(B(\xi_n)) \leq_{a.a.} \phi_2(n)
\end{equation*}
$\PP^*$-almost surely for $\mu$-almost all $\xi \in \partial \mathbb{T}$.

Our first result describes the upper time-space envelope of $\mu(B(\xi_n))$.
\begin{thm}\label{thm:mainresult1}
	Assume \eqref{eq:boundarycase}, \eqref{eq:sigma} and \eqref{eq:integrability}. Let $\psi \in C^1(\RR_+)$ be decreasing such that
	$t^{1/2-\delta}\psi(t)$ is eventually increasing to $+\infty$ for some $ \delta> \frac 1p$. Then $\PP^*$-almost surely for
	$\mu$-almost all
	$\xi \in \partial \mathbb{T}$
	\begin{equation*}
		\mu(B(\xi_n)) \leq_{a.a.} e^{-\sqrt{n}\psi(n)}, \quad \mbox{if } \int^{\infty} \frac{\psi(t)}{t} \: \ud t <\infty
	\end{equation*}
	and	
	\begin{equation*}
		\mu(B(\xi_n)) \geq_{i.o.} e^{-\sqrt{n}\psi(n)}, \quad \mbox{if } \int^{\infty} \frac{\psi(t)}{t} \: \ud t =\infty.
	\end{equation*}	
\end{thm}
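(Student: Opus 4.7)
The plan is to reduce the statement about $\mu(B(\xi_n))$ to a lower-envelope question for the spinal height $V(\xi_n)$ under a $\mu$-biased Peyri\`ere-type measure, and then to prove a discrete analogue of Motoo's integral test for the three-dimensional Bessel process. Concretely, I would introduce the probability $\widehat\P$ on pairs (environment, distinguished ray $\xi\in\partial\mathbb T$) whose first marginal is proportional to $\mu(\partial\mathbb T)\,d\P^*$ and under which, conditionally on the environment, $\xi$ is sampled from $\mu/\mu(\partial\mathbb T)$. By the spinal decomposition for the critical branching random walk in the A\"id\'ekon--Shi framework, the marginal law of $(V(\xi_n))_{n\ge 0}$ under $\widehat\P$ is that of a random walk with tilted step distribution $e^{-v}\,\E[\sum_{|x|=1}\delta_{V(x)}](dv)$, further $h$-transformed so as to remain non-negative; its diffusive scaling limit is the three-dimensional Bessel process. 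Since $\mu(B(\xi_n))=e^{-V(\xi_n)}W_{\xi_n}$, the theorem becomes equivalent to the statement that, under $\widehat\P$, one has $V(\xi_n)-\log W_{\xi_n}\ge \sqrt n\,\psi(n)$ for all but finitely many $n$ if and only if $\int^\infty\psi(t)/t\,dt<\infty$.

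The correction $\log W_{\xi_n}$ turns out to be negligible at the required scale. Conditionally on the spine, $W_{\xi_n}$ is an independent copy of the derivative-martingale limit $\mu(\partial\mathbb T)$, which under~\eqref{eq:integrability} admits a finite polynomial moment of order $p>2$. Markov's inequality combined with Borel--Cantelli then forces $|\log W_{\xi_n}|\le n^{\eps'}$ eventually for any $\eps'>1/p$. Since the hypothesis $t^{1/2-\delta}\psi(t)\uparrow\infty$ implies $\sqrt n\,\psi(n)\gg n^{\delta}$, one may pick $\eps'\in(1/p,\delta)$ and deduce $\log W_{\xi_n}=o(\sqrt n\,\psi(n))$ almost surely: this is precisely the role of the assumption $\delta>1/p$ in the theorem, and it allows us to work from here on with $V(\xi_n)$ alone.

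It remains to prove the integral test for the spine: under $\widehat\P$, $V(\xi_n)\ge \sqrt n\,\psi(n)$ eventually iff $\int^\infty\psi(t)/t\,dt<\infty$. For the convergent direction I would partition time into dyadic blocks $I_k=[2^k,2^{k+1})$ and use that, for the conditioned-to-stay-positive walk, the probability of entering the strip $[0,a]$ during $I_k$ from a typical starting height of order $\sqrt{2^k}$ is of order $a/\sqrt{2^k}$ (the ratio of the renewal harmonic functions at the two heights). Applied with $a=\sqrt{2^k}\psi(2^k)$, this yields the block estimate $\widehat\P(\min_{n\in I_k}V(\xi_n)\le \sqrt{2^k}\psi(2^k))\lesssim \psi(2^k)$, whose Borel--Cantelli sum is comparable to $\int\psi/t\,dt$. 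For the divergent direction, the strong Markov property applied at the successive entries of the walk into $[0,\sqrt{2^k}\psi(2^k)]$ inside $I_k$ produces enough quasi-independence to apply a second Borel--Cantelli argument.

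The main obstacle is capturing the linear $\psi$-scaling in the block-minimum bound rather than the naive $\psi^{3}$ one would obtain from a single-time estimate $\widehat\P(V(\xi_{2^k})\le \sqrt{2^k}\psi(2^k))\asymp \psi(2^k)^3$, which would be far too generous and would produce the wrong integral test. Extracting the correct linear scaling requires sharp renewal and harmonic-function estimates for the (non-Gaussian) tilted walk, uniform in the starting height over windows of size $\sqrt{2^k}$, and it is here that the moment condition~\eqref{eq:integrability} does substantive work beyond the mere control of $\log W_{\xi_n}$.
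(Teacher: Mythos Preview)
Your reduction to the spine under a size-biased measure and your appeal to an integral test for the conditioned walk are in line with the paper's strategy, and the latter is indeed the decisive analytic input (the paper imports it as a black box from Hambly--Kersting--Kyprianou rather than reproving it via dyadic blocks). However, your handling of the ``correction'' $\log W_{\xi_n}$ contains two genuine errors that would cause the argument to fail.

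First, under the biased measure $\widehat\P$ the subtree rooted at $\xi_n$ still contains the continuation of the spine, so $W_{\xi_n}$ is \emph{not} an independent copy of $\mu(\partial\mathbb T)$; its law is that of a size-biased limit, and in particular depends on $V(\xi_n)$ through the harmonic function. Second, and more fatally, in the critical case the derivative-martingale limit $D$ (and hence $\mu(\partial\mathbb T)$) has $\P[D>t]\asymp t^{-1}$ and therefore \emph{no} finite moment of order larger than one; assumption~\eqref{eq:integrability} is a $(\log)^p$-moment condition on the first-generation quantity $L$, not a polynomial moment on $D$. So the Markov/Borel--Cantelli step you propose for $|\log W_{\xi_n}|\le n^{\eps'}$ is not available, and even if it were, it would not give the lower bound on $W_{\xi_n}$ needed for the $\geq_{i.o.}$ half.

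The paper circumvents both difficulties by working with the truncated martingale $D^{(\alpha)}_n$ and the associated measures $\P^{(\alpha)}$, $\mu^{(\alpha)}$. For the upper bound it controls, not $W_{\xi_n}$ itself, but the ratio $G_n^{(\alpha)}=\sup_l D^{(\alpha)}_{w_n,l}/R(\alpha+V(w_n))$, for which a \emph{uniform} bound $\sup_\beta \E^{(\beta)}[(\log^+ G_0^{(\beta)})^p]<\infty$ is extracted from~\eqref{eq:integrability} via an $ab\le e^a+b\log^+ b$ trick; this is exactly where the exponent $p$ and the condition $\delta>1/p$ enter. For the $\geq_{i.o.}$ half it abandons the product form $e^{-V(\xi_n)}W_{\xi_n}$ altogether in favour of the telescoping representation $\mu(B(w_n))=\sum_{k\ge n}e^{-V(w_k)}\widehat D_k$ with purely off-spine contributions $\widehat D_k$, and then shows by a conditional Borel--Cantelli argument that $\widehat D_k$ is bounded below on the same random subsequence where the integral test forces $V(w_k)$ to be small.
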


Note that this result gives necessary and sufficient conditions for $\mu(B(\xi_n)) \leq_{a.a.} e^{-\sqrt{n}\psi(n)}$ allowing to describe the upper time-space envelope of $\mu(B(\xi_n))$ with arbitrarily small gap. In order to illustrate this, define the functions
$\psi_{k}, \psi_{k}^{(\varepsilon)} \colon \NN \to \RR$ for $k \in \NN$ and $\varepsilon > 0$ by
\begin{equation*}
	\psi_{k}(t) = \left[ \prod_{i=1}^{k} \log_{(i)}(t) \right]^{-1}, \quad
	\psi_{k}^{(\varepsilon)}(t) = \psi_{k}(t) \log_{k}(t)^{-\varepsilon},
\end{equation*}
where $\log_{(i)}(t)$ stands for the $i$th iterate of $\log(t)$. We can deduce from Theorem~\ref{thm:mainresult1} that
\begin{equation*}
	e^{-\sqrt{n} \psi_{k}(n)} \leq_{i.o.} \mu(B(\xi_n)) \leq_{a.a.} e^{-\sqrt{n}\psi_{k}^{(\varepsilon)}(n)}.
\end{equation*}
Since the same inequalities would hold if $\psi_{k}^{(\varepsilon)}$ or $\psi_{k}$ was multiplied by an arbitrary positive constant, we deduce that
\begin{equation*}
	\liminf_{n \to \infty} \frac{-\log(\mu(B(\xi_n)))}{\sqrt{n} \psi^{(\varepsilon)}_{k}(n)} = \infty \quad \mbox{and} \quad
	\liminf_{n \to \infty} \frac{ - \log(\mu(B(\xi_n)))}{\sqrt{n} \psi_{k}(n)} = 0
\end{equation*}
$\PP^*$-almost surely for $\mu$-almost all $\xi$. In particular for any $k \in \NN$ one has
\begin{equation*}
	\liminf_{n \to \infty} \frac{\log(-\log(\mu(B(\xi_n)))) - \frac{1}{2}\log n + \sum_{j=2}^k{\log_{(j)}(n)}    }{ \log_{(k+1)}(n)} = -1.
\end{equation*}
Our second result describes the lower time-space envelope.

\begin{thm}\label{thm:mainresult2}
	Assume \eqref{eq:boundarycase}, \eqref{eq:sigma}, \eqref{eq:integrability} and $\EE\left[\sum_{|x|=1}|V(x)|^{3+\varepsilon}e^{-V(x)} \right]<\infty$ for some $\varepsilon >0$. Then for any $\delta>0$, $\PP^*$-almost surely for $\mu$-almost all $\xi \in \partial \mathbb{T}$,
	\begin{equation*}
		\mu(B(\xi_n)) \geq_{a.a.} e^{-(1+\delta)\sqrt{2\sigma^2 n \log\log n}},
	\end{equation*}
	where $\sigma^2$ is given by \eqref{eq:sigma} and 	
	\begin{equation*}
		\mu(B(\xi_n)) \leq_{i.o.} e^{-(1-\delta)\sqrt{2\sigma^2 n \log\log n}}.
	\end{equation*}
\end{thm}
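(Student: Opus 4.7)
The starting point is the factorisation
\begin{equation*}
    \mu(B(\xi_n)) = e^{-V(\xi_n)}\, W_{\xi_n}
\end{equation*}
derived in Section~2.2, which turns the theorem into a law of the iterated logarithm for $V(\xi_n)$ along $\mu$-typical rays, paired with a negligibility estimate for $\log W_{\xi_n}$. Explicitly, I would reduce the statement to the two claims that, $\PP^*$-a.s.\ for $\mu$-a.e.\ $\xi\in\partial\mathbb{T}$,
\begin{equation*}
    \limsup_{n\to\infty} \frac{V(\xi_n)}{\sqrt{2\sigma^2 n\log\log n}} = 1, \qquad |\log W_{\xi_n}| = o\bigl(\sqrt{n\log\log n}\bigr).
\end{equation*}
The ``$\limsup\leq 1$'' half then delivers the first displayed inequality of the theorem, and ``$\limsup\geq 1$'' delivers the second.

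To convert $\mu$-a.s.\ statements along rays into a.s.\ statements on a single probability space, I would use the spinal change of measure underlying the proof of \eqref{eq: aidekon shi}. Starting from the many-to-one identity
\begin{equation*}
    \EE\!\left[\sum_{|x|=n} e^{-V(x)} G\bigl(V(x_1),\ldots,V(x_n)\bigr)\right] = \EE\!\left[G(S_1,\ldots,S_n)\right],
\end{equation*}
where $(S_k)$ is a centred random walk of variance $\sigma^2$ with step law $e^{-V(\cdot)}$-biased, and then applying Doob's $h$-transform by the renewal function of the strict descending ladder heights of $(S_k)$, one obtains the probability measure $\Ps$ carrying a branching random walk with a distinguished spine $(x_n)_{n\geq 0}$ such that (i) $V(x_n)$ is distributed as the random walk $(\hat S_n)$ conditioned to stay non-negative, and (ii) the subtrees hanging off the spine are, up to the usual biasing of the spine's immediate neighbours, independent copies of the original BRW. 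As in the proof of \eqref{eq: aidekon shi}, any $\Ps$-a.s.\ property of the spine descends via the Radon--Nikodym weight to a $\mu$-a.s.\ property of $\mu$-typical rays under $\PP^*$.

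It then suffices to establish, under $\Ps$, the LIL $\limsup_n \hat S_n/\sqrt{2\sigma^2 n\log\log n} = 1$ together with $|\log W_{x_n}| = o(\sqrt{n\log\log n})$. The boundary-case hypothesis \eqref{eq:boundarycase} and the variance condition \eqref{eq:sigma} give mean zero and variance $\sigma^2$ for the tilted step distribution, while the additional assumption $\EE\bigl[\sum_{|x|=1}|V(x)|^{3+\eps} e^{-V(x)}\bigr]<\infty$ supplies the $(3+\eps)$-moment of the increments of $(S_k)$ that is needed for the sharp Gaussian approximation behind the LIL of a walk conditioned to stay non-negative. For $W_{x_n}$, under $\Ps$ and conditionally on the spine, $W_{x_n}$ is distributed as a tilted version of the A\"id\'ekon--Shi limit $\mu(\partial\mathbb{T})$ attached to the shifted subtree; the integrability assumptions provide finite polylogarithmic moments and a polynomially decaying lower tail, so conditional Borel--Cantelli gives $|\log W_{x_n}| = o(\sqrt{n\log\log n})$ $\Ps$-a.s.

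The main obstacle is the ``$\limsup\geq 1$'' half of the LIL for the conditioned walk $(\hat S_n)$: the conditioning to stay non-negative destroys the independent-increments structure used in the classical LIL proof for unrestricted walks, so one must either decompose $(\hat S_n)$ into i.i.d.\ meander excursions via a Tanaka-type pathwise decomposition (and then run an LIL argument on the excursion maxima) or apply a direct second-moment argument along a sparse subsequence $n_k$ to the events $\{\hat S_{n_k}\geq (1-\delta)\sqrt{2\sigma^2 n_k\log\log n_k}\}$. It is here that the $(3+\eps)$-moment hypothesis is essential: it produces the moderate-deviation control on $\Ps(\hat S_{n_k}\geq a\sqrt{n_k})$ for $a$ of order $\sqrt{\log\log n_k}$ that is sharp enough for the divergent side of Borel--Cantelli to succeed, while remaining compatible with the renewal-function weight appearing in the $h$-transform.
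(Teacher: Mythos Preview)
Your high-level strategy---change to the spine measure $\PP^{(\alpha)}$, invoke the LIL for the conditioned walk, and show the multiplicative correction is negligible---matches the paper's. But you have misidentified the main obstacle. The LIL for a random walk conditioned to stay nonnegative is not something one needs to reprove here: under the stated $(3+\varepsilon)$-moment hypothesis it is quoted directly from Hambly et al.\ (Lemma~\ref{lem: lil}). The genuine difficulty is precisely the step you dispatch in one sentence, namely the control of $W_{x_n}$ (equivalently $D_{w_n}$) under the spine measure.

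The paper is explicit about this: under $\PP^{(\alpha)}$, the conditional law of $D_{w_n}$ given the spine is that of $D$ under $\PP^{(\alpha+V(w_n))}$, so it \emph{depends on} $V(w_n)$, and in fact the sequence $\{D_{w_n}\}$ is not even tight. Your claim that ``conditionally on the spine, $W_{x_n}$ is distributed as a tilted version of the A\"id\'ekon--Shi limit'' with ``finite polylogarithmic moments and a polynomially decaying lower tail'' giving $|\log W_{x_n}|=o(\sqrt{n\log\log n})$ via Borel--Cantelli is exactly the gap: you would need these tail estimates \emph{uniformly} over the parameter $\beta=\alpha+V(w_n)\to\infty$, and the $W_{x_n}$ are nested (not conditionally independent across $n$), so neither the moment bound nor the Borel--Cantelli step is routine. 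The paper sidesteps the issue entirely by replacing the factorisation $\mu(B(w_n))=e^{-V(w_n)}W_{w_n}$ with the telescoping representation
\[
\mu(B(w_n))=\sum_{k\ge n} e^{-V(w_k)}\,\widehat D_k,
\]
where $\widehat D_k$ involves only the \emph{off-spine} subtrees at level $k$ (Lemma~\ref{eq: reps}). These $\widehat D_k$ have laws that, conditional on the spine, do not drift with $n$ in the same problematic way, and Lemmas~\ref{lem:upperd} and~\ref{lem:lowerd} give the needed two-sided control ($\widehat D_n\le_{a.a.}e^{n^\delta}$ and $\max_{n^3\le j<(n+1)^3}\widehat D_j\ge_{a.a.}\eta$). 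Without an analogue of this decomposition, your argument for $|\log W_{x_n}|=o(\sqrt{n\log\log n})$ is incomplete.
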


From the above one gets instantly that
\begin{equation*}
	\limsup_{n \to \infty}\frac{-\log(\mu(B(\xi_n)))}{\sqrt{2\sigma^2 n \log\log n}} = 1 \mbox{ a.s.}
\end{equation*}

\subsection{Discussion of the results}
The problem of describing local fluctuations of the Mandelbrot cascades in the critical case was previously investigated by~Barral et al.~\cite{barral2014}. They considered the case when $\mathbb{T}$ is the binary tree $\mathbb{T}_2$ and the branching random walk is generated by the Gaussian distribution and proved that for any $\varepsilon >0$ and any $k\in \NN$
\begin{equation*}
	\exp\left\{-(1+\varepsilon)\sqrt{2\log(2)n\log n} \right\}\leq_{i.o.} \mu(B(\xi_n))  \leq_{a.a.} \exp\left\{-k\log n\right\}
\end{equation*}
and
\begin{equation*}
	\exp\left\{-\sqrt{6 \log(2)} \sqrt{n (\log n + (1/3 + \varepsilon) \log\log n} \right\} \leq_{a.a.} \mu(B(\xi_n))
\end{equation*}
$\PP^*$-almost surely for $\mu$-almost all $\xi \in \partial\mathbb{T}$. Notice that the bounds appearing in the first part of this result, have different asymptotic. Thus, this result does not give a detailed information about
the upper time-space envelope of $\mu(B(\xi_n))$.

To prove Theorems \ref{thm:mainresult1} and \ref{thm:mainresult2} we use the spinal decomposition and the change of measure, based on the work of  Biggins, Kyprianou \cite{biggins2004}, used for example by A\"id\'ekon, Shi~\cite{aidekon2014}. After the change of measure, along every spine $\{w_n\}$
( i.e.~a random element of $\partial\mathbb{T}$), in the new probability space, $V( w_n)$ behaves as a random walk conditioned to stay positive and its fluctuations were studied by Hambly et al.~\cite{Hambly2003327}.  This result provides description of  time-space envelopes with arbitrary small error.

The details are given in Sections \ref{sec: random walk} -- \ref{sec: proofs}. First, in Section \ref{sec: random walk}, we recall some basic properties of the random walk conditioned to stay positive. In Section \ref{sec: change of prob} we describe the change of the probability space and finally in Sections \ref{sec: lemmas} and \ref{sec: proofs} we give complete proof of our results.

\section{One-dimensional random walk}
\label{sec: random walk}

In this section we introduce a one-dimensional random walk associated with the branching random walk defined above. Next we define a random walk conditioned to stay above some level $-\a$ for $\a\ge 0$ and formulate its fundamental properties concerning fluctuations of
its paths. Those results will play a~crucial role in our arguments.

\subsection{An associated one-dimensional random walk}

Assumption \eqref{eq:boundarycase} allows us to introduce a random walk  $\{S_n\}$ with the distribution of increments given by
\begin{equation*}
	\EE[f(S_1)]=\EE \bigg[ \sum_{|x|=1} f(V(x))e^{-V(x)}  \bigg],
\end{equation*}
for any measurable $f \colon \RR \to [0,\infty)$. Then, since the increments $S_2-S_1,S_3-S_2,\ldots$ are independent copies of $S_1$, one can easily show, that for any $n \in \NN$ and measurable $g \colon \RR^n \to \RR$ we have
\begin{equation*}
	\EE[g(S_1, S_2, \ldots , S_n)] = \EE\bigg[\sum_{|x|=n}{g(V(x_1),V(x_2), \ldots , V(x_n))e^{-V(x)}  } \bigg].
\end{equation*}
Note that by \eqref{eq:boundarycase}
$$
	\EE[S_1]=\EE\bigg[\sum_{|x|=1}V(x) e^{-V(x)} \bigg]=0
$$
and thus the random walk $\{ S_n\}$ is centered and by \eqref{eq:sigma} has a finite variance
$$
	\EE\left[S_1^2\right]=\EE\bigg[\sum_{|x|=1}V(x)^2e^{-V(x)} \bigg] = \sigma^2 <\8.
$$	
	
\subsection{A conditioned random walk}
It turns out that in our considerations an important role will be played not by the random walk $\{S_n\}$, but by its trajectories conditioned to stay above $-\a$ for some $\a\ge 0$.  Bertoin and Doney \cite{bertoin1994} showed that for each $k \in \mathbb{N}$,  $A \in \sigma(S_j, j\leq k)$
the limiting probabilities
\begin{equation*}
	\lim_{n \to \infty} \PP[A| \tau_{\alpha}>n],
\end{equation*}
where $\tau_{\alpha}= \inf \{k \geq 1 : S_k < -\alpha \}$, are well defined and nontrivial. Their result is a discrete analogue of the relationship between the Brownian motion and the Bessel-3 process. It~turns out that the conditioned random walk forms a Markov chain.
Here we sketch the~arguments leading to a description of its transition probability.

Since the random walk  $\{S_n\}$ is centered, $\tau^+ = \inf \{k \geq 1 : S_k\geq 0 \}$ is finite a.s. If we put
\begin{equation}\label{eq:defr}
	R(u) = \EE\bigg[\sum_{j=0}^{\tau^+-1}\ind_{\{ S_j \geq -u \}} \bigg],
\end{equation}
we see that by the duality lemma, $R$ is the renewal function associated with the entrance to $(-\infty, 0)$ by the walk $S$. That is, $R$ can be written in the following fashion
\begin{equation*}
	R(u) = \sum_{k \geq 0} \PP[H_k \geq -u]
\end{equation*}
for $u \geq 0$, where $H_0> H_1> H_2> \ldots$ are strictly descending ladder heights of $\{S_n\}$, $H_k = S_{\tau_k^-}$ with $\tau_0^-=0$ and $\tau_{k+1}^- = \min\{ j \geq \tau_k^- | S_j < H_k \}$ for $k \geq 0$.
One can show that $\EE\left[S_1^2 \right] < \infty$ and $\EE[S_1]  = 0$ ensure $\EE[|H_1|] < \infty$ (see e.g. Feller \cite{feller2008introduction}, Theorem XVIII.5.1).

As a consequence of \eqref{eq:defr}, by conditioning on $S_1$, one gets the following identity
\begin{equation}\label{eq: ru}
	R(u) = \EE[R(S_1+u) \ind_{\{ S_1 \geq -u \}}] \quad \mbox{for } u \geq0.
\end{equation}
Thus $\ind_{\{\tau_\a > n\}}R(S_n+\a)$ is a martingale. The corresponding $h$-transform defines a Markov chain such that for any measurable subset $A$ of $\R^k$
\begin{equation}\label{eq: strzalka}
	\PP_{\alpha}^{\uparrow}\big[ (S_1,\ldots, S_k)\in A\big] = \frac{1}{R(\alpha)}\EE\left[ \ind_{\{(S_1,\ldots, S_k)\in A\}\cap \{\tau_{\alpha}>k \}} R(S_k+\alpha) \right].
\end{equation}
Then the transition probability of this Markov chain is given by
$$
	P_{\alpha}^{\uparrow} (x,dy) = \ind_{\{ y\ge -\a\}}\frac{R(y+\a)}{R(x+\a)}P(S_1+x\in dy), \quad x\ge -\a.
$$
The random process $\{S_n\}$ under the probability measure $\Ps$ is called the random walk conditioned to stay in $[-\alpha , \infty)$.

\subsection{Some properties of the renewal function $R$}

Here we collect some properties of the function $R$, following from the renewal theorem, that will be needed in next sections. The renewal theorem (see e.g. Feller \cite{feller2008introduction}) distinguishes between two cases, when the random walk
$\{S_n\}$ is nonarithmetic (i.e. it is not contained in any set of the form $a{\mathbb Z}$ for  positive $a$) and when it is arithmetic. In the first case the renewal theorem says that for every $h>0$ the limit
\begin{equation*}
	 \lim_{u \to \infty} R(u+h)-R(u) = h/\E|H_1|
\end{equation*}
exists and is finite. If the random walk is arithmetic, then the same limit exists but only for $h$ and $u$ being multiplies of $a$:
$$
	\lim_{n\to\infty} R(na + h) - R(na) = h/\E|H_1|.
$$
Below we treat both cases simultaneously since we need just some simple consequences of the results stated above. In both cases the following limit exists
\begin{equation}\label{eq:ren}
	c_0 = \lim_{u \to \infty} \frac{R(u)}{u}\;.
\end{equation}
Whence there are constants $c_2 >  c_1 > 0$ such that for any $u \geq 0$
\begin{equation}\label{eq:renineq}
	c_1 (1 + u) \leq R(u) \leq c_2 (1 + u).
\end{equation}
Moreover, there is a constant $c_3>0$ such that for every $u,x>0$
\begin{equation}\label{eq:rennnn}
	R(u+x) - R(u) \le c_3(1+x).
\end{equation}

\subsection{Some properties of the conditioned random walk}

Here we describe some properties of trajectories of the Markov chain $(\{S_n\}, \Ps)$ that will be needed in the proofs of our main results. Analogously to the Bessel-3 process, paths of the conditioned random walk stays in 'some neighborhood' of $n^{1/2}$.
The precise description of its fluctuations was provided by Hambly et al. \cite{Hambly2003327} and is stated in the next two lemmas.

\begin{lem}[Law of the iterated logarithm]\label{lem: lil}
	Suppose that for some $\delta > 0$, $\EE |S_1|^{3+\delta}< \infty$. Then
	\begin{equation*}
		\limsup_{n \to \infty}{ \frac{S_n}{\sqrt{2n\sigma^2 \log\log n}} }=1 \qquad  \Ps\ \mbox{a.s.}
	\end{equation*}
\end{lem}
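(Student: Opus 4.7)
The plan is to prove $\limsup \le 1$ and $\limsup \ge 1$ separately, each by a Borel--Cantelli argument along a geometric subsequence that transfers tail information from the unconditioned walk $(S_n,\PP)$ to the chain $(S_n,\Ps)$ via the $h$-transform identity \eqref{eq: strzalka}. Throughout, set $\phi(n) = \sqrt{2n\sigma^2 \log\log n}$.

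For the upper half, I would start from
\[
\Ps[S_n \ge (1+\varepsilon)\phi(n)] \;=\; \frac{1}{R(\alpha)}\, \EE\!\left[ R(S_n+\alpha)\, \ind_{\{\tau_\alpha > n,\, S_n \ge (1+\varepsilon)\phi(n)\}} \right],
\]
bound $R(S_n+\alpha) \le c_2(1+S_n+\alpha)$ using \eqref{eq:renineq}, and invoke a sharp tail estimate of the form
\[
\PP\!\left[S_n \ge x,\; \tau_\alpha > n\right] \;\le\; \frac{C(1+\alpha)\, x}{n^{3/2}}\, e^{-x^2/(2n\sigma^2)}
\]
valid uniformly for $x \le C\sqrt{n\log\log n}$. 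Such an estimate is a conditioned local CLT (of the Caravenna / Vatutin--Wachtel type), and the hypothesis $\EE|S_1|^{3+\delta}<\infty$ is what makes the moderate-deviation remainder negligible. Substituting and integrating gives
\[
\Ps[S_n \ge (1+\varepsilon)\phi(n)] \;\le\; C'(\log n)^{-(1+\varepsilon)^2}\sqrt{\log\log n},
\]
which is summable along any geometric subsequence $n_k = \lfloor\rho^k\rfloor$, $\rho>1$. The first Borel--Cantelli lemma gives $S_{n_k}\le (1+\varepsilon)\phi(n_k)$ eventually, $\Ps$-a.s. Combining this with the Markov property of $(S_n,\Ps)$ at time $n_k$ and a maximal-type estimate for $\max_{n_k\le n < n_{k+1}} S_n$ then closes the gap between consecutive subsequence points at the cost of a factor $1+o(1)$ as $\rho\downarrow 1$.

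For the lower half, I would work along a sparser geometric subsequence $n_k = \lfloor \rho^k \rfloor$ with large $\rho$, and set $m_k = n_{k+1}-n_k$. The Markov property of $(S_n,\Ps)$ at time $n_k$ expresses
\[
\Ps\!\left[S_{n_{k+1}} \ge (1-\varepsilon)\phi(n_{k+1})\,\big|\,\F_{n_k}\right]
\]
as the probability that a conditioned walk started from the random level $S_{n_k}$ overshoots $(1-\varepsilon)\phi(n_{k+1})$ in $m_k$ steps. On the high-probability event $\{|S_{n_k}| \le \rho^{k/4}\}$ this starting level is negligible on scale $\sqrt{m_k}$, and a matching lower bound
\[
\Ps\!\left[S_m \ge \lambda \sigma\sqrt{m}\right] \;\ge\; c\,\lambda\, e^{-\lambda^2/2}, \qquad 1 \le \lambda \le C\sqrt{\log\log m},
\]
following from the same conditioned local CLT in its lower-bound form, shows that each conditional probability is of order $(\log n_{k+1})^{-(1-\varepsilon)^2}$. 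Since these diverge in $k$, Lévy's conditional Borel--Cantelli lemma produces infinitely many $k$ with $S_{n_{k+1}} \ge (1-\varepsilon)\phi(n_{k+1})$.

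The main obstacle is obtaining a two-sided tail bound for $\PP[S_n \ge x,\; \tau_\alpha > n]$ that captures the correct factor $x/n^{3/2}$ arising from the ballot-type conditioning, uniformly in the moderate-deviation regime $x \asymp \sqrt{n\log\log n}$. The naive bound obtained by dropping $\ind_{\tau_\alpha > n}$ and invoking a Gaussian tail for $S_n$ is too weak by a factor of $\sqrt{\log\log n}$ once the linear weight $R(S_n+\alpha)$ from the $h$-transform is included, and neither Borel--Cantelli direction would close. Securing this conditioned local CLT under the moment assumption $\EE|S_1|^{3+\delta}<\infty$ (as done by Hambly, Jones and Wattis \cite{Hambly2003327}) is the technical heart of the argument.
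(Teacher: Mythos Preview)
The paper does not prove this lemma at all: it is quoted as a known result of Hambly et al.~\cite{Hambly2003327}, together with Lemma~\ref{lem:envelope}, and used as a black box in the proofs of Theorems~\ref{thm:mainresult1} and~\ref{thm:mainresult2}. There is therefore no ``paper's own proof'' to compare your attempt against.

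That said, your sketch is a reasonable outline of how such LIL results for conditioned walks are typically established, and you correctly locate the crux in the two-sided moderate-deviation estimate for $\PP[S_n\ge x,\ \tau_\alpha>n]$ in the range $x\asymp\sqrt{n\log\log n}$. A couple of points to tighten if you ever need to flesh this out: (i) for the upper half, controlling only $S_{n_k}$ along a geometric subsequence is not enough---you also need a maximal bound on $\max_{n_k\le n<n_{k+1}}(S_n-S_{n_k})$ under $\Ps$, and you should make explicit how the $h$-transform interacts with that maximal step; (ii) for the lower half, the conditional Borel--Cantelli argument requires that the events you construct are adapted to the filtration of the conditioned chain and that the conditional probabilities are bounded below on an event of full $\Ps$-probability, which needs a little care since the transition kernel of $(S_n,\Ps)$ depends on the current position through $R(\cdot+\alpha)$. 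None of this is an obstruction, but it is where the moment hypothesis $\EE|S_1|^{3+\delta}<\infty$ actually gets used, via the Berry--Esseen-type remainder in the conditioned local CLT.
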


\begin{lem}[Lower Space-Time Envelope]\label{lem:envelope}
	Suppose that $\EE [S_1^2] < \infty$ and $\psi$ is a function on $(0,\8)$ such that $\psi(t) \downarrow 0$ and $\sqrt{t} \psi(t) \uparrow \infty$ as $t\uparrow \8$. Then
	\begin{equation*}
		\liminf_{n \to \infty}{  \frac{S_n}{ \sqrt{n}\psi(n) }  } = \infty \mbox{ or } 0 \qquad  \Ps\ \mbox{a.s.}
	\end{equation*}
	accordingly as
	\begin{equation*}
		\int^{\infty}{\frac{\psi(t)}{t} \: \ud t  } < \infty \mbox{ or } = \infty.
	\end{equation*}
\end{lem}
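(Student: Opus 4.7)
The strategy is to transfer Motoo's integral test for the Bessel-3 process to the present discrete setting. Under $\Ps$ the chain $\{S_n\}$ is an $h$-transform of the centered walk via $R(\cdot+\alpha)$, and its diffusive rescaling converges by an invariance principle to $\sigma$ times a Bessel-3 process. Motoo's classical theorem asserts exactly the dichotomy of the lemma for BES(3), so the task is to establish its discrete analogue with enough uniformity to run Borel--Cantelli. The central quantitative input is a hitting-probability estimate: for $0\le y \le x$ and $x$ large,
\begin{equation*}
\Ps\bigl[\inf_{n\ge 0}(S_n+\alpha)\le y \;\big|\; S_0 = x\bigr] \;\asymp\; R(y)/R(x+\alpha) \;\asymp\; y/x,
\end{equation*}
which is the direct analogue of $P_x[\inf_t R_t\le y] = y/x$ for BES(3) and follows from the $h$-transform structure of $\Ps$ (optional stopping applied to $1/R(S_n+\alpha)$) together with \eqref{eq:ren}--\eqref{eq:renineq}.

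For the convergent case $\int^\infty \psi(t)/t\,\ud t<\infty$ I would fix $\eps>0$ and pass to a geometric subsequence $n_k=\lfloor\rho^k\rfloor$. Combining the strong Markov property at $n_k$, the concentration of $S_{n_k}$ on scale $\sqrt{n_k}$ (from the conditioned CLT), the monotonicity of $\psi$, and the hitting estimate above, one expects
\begin{equation*}
\Ps\bigl[\exists\, n\in[n_k,n_{k+1}]:S_n\le \eps\sqrt{n}\,\psi(n)\bigr] \;\le\; C\psi(n_k).
\end{equation*}
Since $\sum_k\psi(n_k)\asymp \int^\infty\psi(t)/t\,\ud t<\infty$, the first Borel--Cantelli lemma forces $\liminf S_n/(\sqrt{n}\,\psi(n))\ge 1/\eps$, and letting $\eps\downarrow 0$ closes this half.

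For the divergent case I would introduce the event $A_k$ that the walk downcrosses from level $\sqrt{n_k}$ to level $\eps\sqrt{n_k}\psi(n_k)$ somewhere in $[n_k,n_{k+1}]$. The hitting estimate gives $\Ps(A_k)\asymp \eps\psi(n_k)$, so $\sum_k \Ps(A_k)=\infty$ by hypothesis. The hardest step will be the correlation bound $\Ps(A_j\cap A_k)\le(1+o(1))\Ps(A_j)\Ps(A_k)$ needed to deploy the Kochen--Stone/second-moment form of Borel--Cantelli, since the $A_k$ are not independent. I would establish it by applying the strong Markov property at time $n_j$ and using that under $\Ps$ the chain is transient, so that after the downcrossing it returns to the diffusive scale $\sqrt{n_k}$ within $o(n_k)$ further steps (choosing $\rho$ large makes this recovery fit comfortably in $(n_j,n_k)$). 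Once $A_k$ occurs infinitely often one has $\liminf S_n/(\sqrt{n}\,\psi(n))\le\eps$; letting $\eps\downarrow 0$, together with the lower bound $S_n\ge-\alpha$ under $\Ps$, yields $\liminf=0$.

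The main obstacle is the hitting-probability estimate itself, together with the correlation bound for the second-moment argument. Both rely on sharp renewal asymptotics for $R$ and a local central limit theorem for the conditioned walk -- precisely the analytical input furnished by Hambly--Kersting--Kyprianou \cite{Hambly2003327} -- after which the Borel--Cantelli machinery runs essentially as in the Bessel-3 case.
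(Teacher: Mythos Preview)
The paper does not prove this lemma at all: it is quoted verbatim as a result of Hambly, Kersting and Kyprianou \cite{Hambly2003327} (see the sentence preceding Lemmas~\ref{lem: lil} and~\ref{lem:envelope}), with no argument supplied. So there is no ``paper's own proof'' to compare against; the authors treat the statement as a black box.

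Your sketch is a reasonable outline of how the proof in \cite{Hambly2003327} actually proceeds --- Motoo's integral test transported from the Bessel-3 process to the $h$-transformed walk via the hitting estimate $\Ps[\inf S_n \le y \mid S_0=x]\asymp R(y)/R(x)$, geometric blocking, and first/second-moment Borel--Cantelli. You correctly identify that the technical weight lies in the hitting-probability asymptotics and the decorrelation needed for Kochen--Stone, and you already point to \cite{Hambly2003327} for those ingredients. In that sense your proposal is not so much an alternative proof as a synopsis of the cited one; if the intent was to supply a self-contained argument, the decorrelation step for the divergent case would need to be written out in full, since ``transience implies recovery in $o(n_k)$ steps'' is where the real work hides.
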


We will need also two further auxiliary lemmas reflecting the fact that trajectories of the~conditioned random walk goes to $+\8$. The first lemma is due to Biggins \cite{biggins2003}.

\begin{lem} \label{lem: biggins}
	Fix $y\ge x\ge -\a$. Then
	$$
		\Ps\big[ \min_{n\ge 1} S_n>x  \big|S_0 = y\big] = \frac{{ R( y-x)}}{R(\a+y)}.
	$$
\end{lem}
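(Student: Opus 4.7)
The plan is to obtain the identity from a finite-horizon version via the $h$-transform formula~\eqref{eq: strzalka} and a monotone-limit argument. Writing
\[
A_k = \{\,S_j > x \text{ for all } 1 \le j \le k\,\},
\]
the event of interest is $\bigcap_k A_k$. Since $x \ge -\alpha$ forces $A_k \subseteq \{\tau_\alpha > k\}$, applying the change-of-measure formula~\eqref{eq: strzalka} with starting point $y$ rather than the origin gives
\[
\Ps[A_k \mid S_0 = y] = \frac{1}{R(y+\alpha)}\,\EE\bigl[\ind_{A_k}\, R(S_k + \alpha) \,\bigm|\, S_0 = y\bigr],
\]
so the proof reduces to computing the limit of the right-hand side as $k \to \infty$.

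The main new input I would introduce is a second martingale, obtained by relocating the barrier. Using the defining equation~\eqref{eq: ru} for $R$ with argument $u = S_n - x \ge 0$, one checks that
\[
M_n := R(S_n - x)\,\ind_{\{\,S_j \ge x,\; 1 \le j \le n\,\}}
\]
is a nonnegative martingale under the unconditioned law with $S_0 = y \ge x$ and of initial value $R(y - x)$; this is precisely the analogue of the martingale $R(S_n+\alpha)\ind_{\{\tau_\alpha > n\}}$ but with the barrier shifted from $-\alpha$ to $x$. Taking expectations then yields
$\EE[\ind_{A_k}\, R(S_k - x) \mid S_0 = y] = R(y - x)$ for every $k$, modulo the strict-vs.-non-strict distinction between $\{S_j > x\}$ and $\{S_j \ge x\}$ in the definition of $A_k$.

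It will then remain to show that replacing $R(S_k + \alpha)$ by $R(S_k - x)$ incurs a vanishing error. By~\eqref{eq:rennnn},
\[
0 \le R(S_k + \alpha) - R(S_k - x) \le c_3(1 + \alpha + x)
\]
on $A_k$, and the unconditioned walk $\{S_n\}$ is centered with finite variance, hence recurrent, so $\P[A_k \mid S_0 = y] \to 0$ as $k \to \infty$. Multiplying these two bounds gives
$\EE[\ind_{A_k}(R(S_k+\alpha) - R(S_k-x)) \mid S_0 = y] \to 0$, and the identity follows. The only delicate point I anticipate is the strict-vs.-non-strict comparison that enters both in identifying $A_k$ with $\{S_j \ge x,\; j \le k\}$ and in verifying the martingale property of $M_n$ via~\eqref{eq: ru}; this is automatic in the non-arithmetic case and can be handled by a small separate argument in the arithmetic one.
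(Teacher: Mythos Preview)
The paper does not prove this lemma; it simply attributes the statement to Biggins~\cite{biggins2003} and uses it as a black box. So there is no in-paper proof to compare your argument against.

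Your approach is correct and self-contained. Unwinding the $h$-transform to land back in the unconditioned walk, recognising $R(S_n-x)\ind_{\{\min_{j\le n}S_j\ge x\}}$ as the same martingale as $R(S_n+\alpha)\ind_{\{\tau_\alpha>n\}}$ with the barrier shifted from $-\alpha$ to $x$, and then killing the bounded discrepancy $R(S_k+\alpha)-R(S_k-x)\le c_3(1+\alpha+x)$ via the recurrence estimate $\PP[A_k\mid S_0=y]\to 0$ are all sound steps. The only point you flag --- the strict versus non-strict inequality --- is genuinely an issue in the arithmetic case, but it does not affect how the paper uses the lemma: in the proof of Lemma~\ref{lem:conditionedkozlov} the identity is only exploited through the complementary probability $1-R(y-x)/R(\alpha+y)$, which is then bounded via~\eqref{eq:rennnn} and~\eqref{eq:renineq}, and a shift of the barrier by one lattice step changes $R(y-x)$ by a bounded amount, leaving those bounds intact.
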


The next lemma seems to be standard, however since we don't know any reference we provide a complete proof of it.

\begin{lem}\label{lem:conditionedkozlov}
	For fixed $x>0$ there is $c_4$ such  that $$\Ps\big[\min_{k\geq n}S_k\le x \big] \leq \frac{c_4\log n}{\sqrt{n}}, \qquad  n>1.$$
\end{lem}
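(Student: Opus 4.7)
The plan is to apply the Markov property of the chain $(S_n,\Ps)$ at time $n$, use Lemma~\ref{lem: biggins} to express the conditional probability of staying above $x$ purely in terms of $S_n$, and then transfer the resulting expectations to the unconditioned walk via the change-of-measure formula~\eqref{eq: strzalka}. Starting from the decomposition
\begin{equation*}
\Ps\bigl[\min_{k\ge n} S_k\le x\bigr] = \Ps[S_n\le x] + \Ps\bigl[S_n>x,\ \min_{k>n} S_k\le x\bigr],
\end{equation*}
Lemma~\ref{lem: biggins} gives, on the event $\{S_n>x\}$,
\begin{equation*}
\Ps\bigl[\min_{k>n} S_k>x\bigm|\Fcal_n\bigr] = \frac{R(S_n-x)}{R(\alpha+S_n)},
\end{equation*}
so the second summand equals $\Es\bigl[\ind_{\{S_n>x\}}(R(\alpha+S_n)-R(S_n-x))/R(\alpha+S_n)\bigr]$.

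Next, I would bound the two summands separately using~\eqref{eq: strzalka}. For the first, monotonicity of $R$ gives $R(\alpha+S_n)\le R(\alpha+x)$ on $\{S_n\le x\}$, whence
\begin{equation*}
\Ps[S_n\le x] \le \frac{R(\alpha+x)}{R(\alpha)}\,\P[\tau_\alpha>n].
\end{equation*}
For the second, I would invoke~\eqref{eq:rennnn} (applied with $u=S_n-x$ and increment $\alpha+x$) to bound the numerator by $c_3(1+\alpha+x)$, and use the lower bound $R(\alpha+S_n)\ge c_1(1+\alpha+S_n)$ from~\eqref{eq:renineq}; applying~\eqref{eq: strzalka} once more together with the upper bound $R(\alpha+S_n)\le c_2(1+\alpha+S_n)$ yields
\begin{equation*}
\Es\!\left[\ind_{\{S_n>x\}}\frac{R(\alpha+S_n)-R(S_n-x)}{R(\alpha+S_n)}\right] \le \frac{c_2 c_3(1+\alpha+x)}{c_1\,R(\alpha)}\,\P[\tau_\alpha>n].
\end{equation*}

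Combining these two estimates reduces the lemma to the classical bound $\P[\tau_\alpha>n]=O(n^{-1/2})$, valid for any centered random walk with finite variance (Sparre Andersen / Feller). This actually gives a conclusion stronger than stated, namely $\Ps[\min_{k\ge n} S_k\le x]=O(n^{-1/2})$, so the $\log n$ factor in the statement is not needed. The only real bookkeeping point is keeping track of strict versus non-strict inequalities when invoking Biggins' formula and verifying the domain assumption of~\eqref{eq:rennnn}; both are routine since $R$ is nondecreasing and $R(0)=1>0$.
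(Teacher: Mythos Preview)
Your argument is correct and in fact yields the sharper bound $\Ps[\min_{k\ge n}S_k\le x]=O(n^{-1/2})$, with no logarithm. The paper takes a different route: it works entirely under $\Ps$ and invokes Caravenna's local limit theorem for the conditioned walk, \eqref{eq:nierownosc_Stona}. The event is split according to whether $S_n\le 2x$, $2x<S_n\le nx$, or $S_n>nx$; the first and third pieces give $O(n^{-1/2})$ and $O(n^{-1})$ respectively, but the middle range is handled by slicing $[2x,nx]$ into intervals of fixed width $h$ and summing $\sum_{i\lesssim n}\frac{1}{1+i}\cdot\frac{1}{\sqrt n}$, which is where the $\log n$ enters.

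Your approach sidesteps the local limit theorem by transferring back to the unconditioned walk via~\eqref{eq: strzalka}: the Radon--Nikodym density $R(\alpha+S_n)$ cancels against the denominator coming from Lemma~\ref{lem: biggins}, leaving only the increment $R(\alpha+S_n)-R(S_n-x)$, which is uniformly bounded by~\eqref{eq:rennnn}. Everything then reduces to the classical tail $\P[\tau_\alpha>n]=O(n^{-1/2})$. One small clean-up: because of that exact cancellation, the detour through $c_1(1+\alpha+S_n)\le R(\alpha+S_n)\le c_2(1+\alpha+S_n)$ is unnecessary and the extra factor $c_2/c_1$ in your second displayed bound can be dropped.
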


\begin{proof}
	In the proof we need the local limit theorem for conditioned random walks due to Caravenna \cite{caravenna}. In our settings this result implies that for fixed $h>0$ there is  $c_5$ such that
	\begin{align}\label{eq:nierownosc_Stona}
		\sup_{r \ge -\a}\Ps\big[r\le S_n \le r+h \big]\le \frac{c_5}{ \sqrt  n},
	\end{align}
	for any $n\ge 1$. We write
	\begin{equation}\label{eq: local}
	\begin{split}
		\Ps&\big[\min_{k\geq n}  S_k\le x \big] \\&\le \Ps\big[S_n\le 2x \big]\!\! +
		\Ps\big[\min_{k\geq n}S_k\le x;S_n > nx \big]
		\!\!+ \Ps\big[\min_{k\geq n}S_k\le x; 2x <   S_n \le nx \big].
	\end{split}
	\end{equation}
	Thus we have to bound three expressions. The first term, by \eqref{eq:nierownosc_Stona}, can be bounded in the following 	
	fashion:
	\begin{align*}
 		\Ps\left[S_n\le  2x \right] \le \frac{ c_5 { (2x+\a)}}{ h\sqrt n}.
	\end{align*}
	For the second one we use Lemma \ref{lem: biggins}, inequality \eqref{eq:rennnn} and the lower bound in~\eqref{eq:renineq}
	\begin{align*}
		\Ps\big[\min_{k\geq n}S_k\le x;S_n > nx \big]
			&= \int_{nx}^\8 \Ps\big[\min_{k\geq n}S_k\le x|S_n = y]\Ps[S_n \in dy \big]\\
			&= \int_{nx}^\8 \Big(1-\Ps\big[\min_{k\geq n}S_k> x|S_n = y]\Big)\Ps[S_n \in dy \big]\\	
			&=\int_{nx}^\8 \bigg( 1-  \frac{{ R( y-x)}}{R(\alpha+y)}\bigg)\,\Ps[S_n \in dy \big]\\
			&=\int_{nx}^\8 \frac{R(\alpha+y)-{ R(y-x)}}{R(\alpha+y)}\,\Ps[S_n \in dy \big]\\
			&\le \frac{c_3}{c_1}\int_{nx}^\8 {  \frac{1+\alpha + x}{1+\alpha+y}}\,\Ps[S_n \in dy \big]\\
			&\le \frac{c_3}{c_1} \frac{{ 1+\alpha+ x}}{1+\alpha+nx}\,\Ps[S_n >nx \big]\\
			&\le \frac{c_6}{n}\, ,
	\end{align*}
	for some $c_6$. To estimate the last term in \eqref{eq: local} we apply, successively, Lemma~\ref{lem: biggins}, \eqref{eq:rennnn}, the lower bound in \eqref{eq:renineq} and \eqref{eq:nierownosc_Stona}
	\begin{align*}
		\Ps\big[\min_{k\geq n}S_k\le x;\; & 2x <   S_n \le nx \big]\\
			&= \int_{2x}^{nx} \Ps\big[\min_{k\geq n}S_k\le x |   S_n =y \big] \Ps\big[ S_n \in dy \big]\\
			&= \int_{2x}^{nx}\Big( 1- \Ps\big[\min_{k\geq n}S_k> x |   S_n =y \big]\Big) \Ps\big[S_n \in dy \big]\\
			&= \int_{2x}^{nx}  \frac{R(\alpha + y)-R( y-x)}{R(\alpha +y)} \Ps\big[S_n \in dy \big]\\
			&\le\frac{c_3}{c_1} \int_{2x}^{nx}  \frac{1+\alpha+x}{1+\alpha+y} \Ps\big[S_n \in dy \big]\\
			&\le\frac{c_3}{c_1} \sum_{0 \le i \le xn/h}  \frac{1+\alpha+x}{1+\alpha+2x+ih} \Ps\big[2x+ih\le S_n \le 2x+(i+1)h \big]\\
			&\le  \frac{c_7(1+ \log(n))}{\sqrt n}\,.
	\end{align*}
	This completes the proof.
\end{proof}

\section{Derivative martingale and change of probabilities}
\label{sec: change of prob}

\subsection{Derivative martingale}

To study local properties of the random measure $\mu$ it is  convenient to express it in terms of  another fundamental martingale associated with the~branching random walk, namely of the derivative martingale. It is defined as
\begin{equation*}
	D_{n} = \sum_{|x|=n} V(x) e^{-V(x)}.
\end{equation*}
Our assumption \eqref{eq:boundarycase} ensures that this formula defines a centered martingale, i.e. $\E D_{n} = 0$ for all $n\in \N$. Convergence of the derivative martingale was studied by Biggins and Kyprianou \cite{biggins2004}, who proved that under assumptions
\eqref{eq:boundarycase}, \eqref{eq:sigma} and \eqref{eq:integrability}
$$
	D_{n} \to D,\qquad \PP^*\mbox{ a.s.}
$$
and $D >0$, $\P^*$ a.s. A\"id\'ekon and Shi \cite{aidekon2014} were able to relate $D$ with the limit of the additive martingale, that is $\mu(\partial \mathbb{T})$  (see \eqref{eq: aidekon shi}) and proved that
\begin{equation*}
	\mu(\partial \mathbb{T})=\left(\frac{2}{\pi \sigma^2} \right)^{1/2} D,\qquad \P^* \mbox{ a.s.}
\end{equation*}
Similarly, starting the derivative martingale from any vertex $x\in \T$, that is considering
\begin{equation*}
	D_{x,n} = \sum_{\substack{ |y|=|x|+n\\ y>x}} (V(y) - V(x)) e^{-(V(y)-V(x)) },
\end{equation*}
gives a.s. limit $D_x = \lim_{n\to \infty} D_{x,n}$. Since $W_x=c_8D_x$ we get
\begin{equation} \label{eq: muss}
	\mu(B(x)) = c_8 e^{-V(x)} D_x , \qquad \P^* \mbox{ a.s,}
\end{equation}
where $c_8 = \left(\frac{2}{\pi \sigma^2} \right)^{1/2}$.

\subsection{Change of probabilities}\label{sec: prob}

Our argument for Theorems~\ref{thm:mainresult1} and \ref{thm:mainresult2} require a change of the probability space. This is a~standard approach in the theory of branching random walks. An appropriate change of the probability measure reduces the main problem to a question expressed in terms of a~random walk on the real line. We would like
to use the fact that $\{D_n\}$ is a~martingale and apply the Doob h-transform. Unfortunately the derivative martingale is not positive. To overcome this difficulty we follow the approach based on the truncated argument presented in Biggins, Kyprianou \cite{biggins2004}
and  A\"id\'ekon, Shi \cite{aidekon2014}. For any vertex $x\in \mathbb{T}$ put
\begin{equation*}
	\underline{V}(x) = \min_{ y \in \llbracket \emptyset,x\rrbracket}{V(y)}.
\end{equation*}
Define the truncated martingale as
\begin{equation}\label{eq:truncated}
	D_{n}^{(\alpha)} = \sum_{|x|=n}{R(V(x) + \alpha)e^{-V(x)}\ind_{ \{\underline{V}(x)\ge-\alpha\}} },
\end{equation}
where $R$ is given by~\eqref{eq:defr}. Because of \eqref{eq:boundarycase} and \eqref{eq:ren}, we expect that for large values of $\a$, $D_n^{(\a)}$ should be comparable with $D_n$. In next sections we describe how these martingales are related with each other in
terms of the cascade measures.

Assuming  \eqref{eq:boundarycase}, Biggins and Kyprianou \cite{biggins2004} proved that for any $\alpha \geq 0$, $\big\{ D_n^{(\alpha)}\big\}$ is a~nonnegative martingale with $\EE\big[D_n^{(\alpha)}\big]=R(\alpha)$. Using this fact we can define a probability
measure $\PP^{(\alpha)}$ via
\begin{equation*}
	\PP^{(\alpha)}|_{\Fcal_n} = \frac{D_{n}^{(\alpha)}}{R(\alpha)} \cdot \PP|_{\Fcal_n}
\end{equation*}
that is for any $A \in \Fcal_n$,
\begin{equation} \label{eq: ppa}
	\PP^{(\alpha)}[A]=\EE\bigg[\ind_A \frac{D_{n}^{(\alpha)}}{R(\alpha)} \bigg].
\end{equation}
Giving rigorous arguments requires $\PP^{(\alpha)}$ to be defined on the space of marked trees with distinguished rays, that is infinite lines of  descents starting from the root or simply the elements of $\partial \mathbb{T}$ (we refer to Neveu~\cite{neveu1986} for more details).
The distinguished ray is called the spine and will be denoted by $\{w_n\}$. To explain how it is chosen, let for any $u>\a$,  $\widehat{\Theta}^{(\alpha)}(u)$ denote a~point process whose distribution under $\PP$ is the law of $(u + V (x), |x| = 1)$ under
$\PP^{(\alpha + u)}$.  We start with a~single particle placed at the origin of the real line. Denote this particle by $w_0 = \emptyset$. At $n$th moment in time (for $n > 0$), each particle of generation $n$ dies and gives birth to point processes independently of each other:
the particle $w_{n}$ generates a point process distributed as $\widehat{\Theta}^{(\alpha)}\big(V\big(w_{n} \big)\big)$ whereas other particle say $x$, with $|x| = n$ and $x \neq w_{n}$ generates a point process distributed as $V(x) + \Theta$. Finally the particle
$w_{n+1}$ is chosen among the children $y$ of $w_n$ with probability proportional to $R(\a+ V (y))e^{-V (y)} \ind_{ \{  \underline{V} (y)\ge -\alpha \}} $. An induction argument proves
\begin{equation*}
	\PP^{(\alpha)} \big[ w_n=x\big|\Fcal_n\big]= \frac{R(V(x)+\alpha)e^{-V(x)} \ind_{\{ \underline{V}(x)\geq -\alpha \}} }{D_n^{(\alpha)}}\;.
\end{equation*}

Note that, formally, the measure defined above is different from the one defined by the equation \eqref{eq: ppa}. However, since  there is a natural projection from the space of marked trees with  distinguished rays to the space of marked trees and  \eqref{eq: ppa} defines marginal law of $\P^{(\a)}$ defined on space of marked trees with  distinguished rays, we feel free, by slight abuse of notation, to use the same  symbol for both measures.

\subsection{Spine and conditioned random walk}

Biggins and Kyprianou \cite{biggins2004} proved that the~positions of the particles obtained in the way described above have the same distribution as the branching random walk under $\PP^{(\alpha)}$. Moreover the process $\big\{V(w_n)\big\}$ under
$\PP^{(\alpha)}$, is distributed as the centered random walk $\{S_n\}$ conditioned to stay in $[-\alpha, \infty)$.

Since the truncated martingale~\eqref{eq:truncated} is positive, it has an a.s. limit
\begin{equation}\label{eq: da}
	D^{(\alpha)} = \lim_{n \to \infty} {D^{(\alpha)}_n}.
\end{equation}
It turns out (see Biggins and Kyprianou \cite{biggins2004}) that this convergence holds also in  mean. This implies in particular that $\PP^{(\alpha)}$ is absolutely continuous with respect to $\PP$ with density $D^{(\alpha)} $, that is for any $A \in \Fcal$
\begin{equation*}
	\PP^{(\alpha)}[A] = \EE \bigg[\ind_A \frac{D^{(\alpha)}}{ R(\alpha)} \bigg].
\end{equation*}

\subsection{Truncated cascades}

The truncated martingale $D^{(\alpha)}_n$ introduced above is a useful tool to provide a different construction of the  measure $\mu$. The idea is to define a  truncated version of Mandelbrot cascades that converge to some limit measure which with high probability, up to a multiplicative constant, coincides with $\mu$. The advantage of this approach is that it  allows us to prove the upper bound in Theorem \ref{thm:mainresult1} and deduce continuity of measure $\mu$.

For given $\alpha\ge0$ and any $x\in \mathbb{T}$ we consider the martingale
\begin{equation}\label{eq: dnmtg}
	D_{x,n}^{(\alpha)} = \sum_{\substack{|y|=|x|+n\\ y>x}} R(V(y) +\alpha) e^{-(V(y)-V(x)) }\ind_{\{\underline{V}^x(y)\ge-\alpha\}},
\end{equation}
where for $y>x$, $\underline{V}^x(y) = \min_{ z \in \llbracket x,y\rrbracket}{V(z)}$. As before  $D_{x,n}^{(\alpha)}$ converges  almost surely and in the mean to the limit $D_{x}^{(\alpha)}$. We may define now  the  measure $\mu^{(\alpha)}$ on $\partial \T$ by setting
\begin{equation}\label{eq: mua}	
	\mu^{(\alpha)}(B(x))=\ind_{\{\underline{V}(x)\ge-\alpha\}}e^{-V(x)}D_{x}^{(\alpha)}.
\end{equation}
Note that since $\mu_n(\partial \mathbb{T})= \sum_{|x|=n}e^{-V(x)} \to 0$, $\inf_{|x|=n}V(x) \to \infty$. Thus, by \eqref{eq:ren} we have
$$
	\mu^{(\alpha)}(B(x))=c_0e^{-V(x)}D_{x}
$$
on the set $\{\min_{x\in\T}V(x)>-\alpha\}$. Since the probability of the last event is  at least $1-c_9 e^{-\a}$ (cf. formula (2.2) in  \cite{bk14}) we have
\begin{align}\label{eq:10}
	\mu=\frac{c_0}{c_8}\mu^{(\alpha)},
\end{align}
 with probability at least $1-c_9 e^{-\a}$.

\subsection{ Reduction to the measure $\P^{(\a)}$}

Our main result concerns $\P^*$-a.s. fluctuations of  $\mu(B( \xi_n))$ along infinite path $\xi \in \partial \mathbb{T}$ that does not belong to the null set of a measure~$\mu$.
However, here we explain how to reduce the problem to the measure $\P^{(\a)}$.
Instead of sampling $\xi$ according to $\mu$ (or its normalized version) we will take it as the~random spine $\{w_n\}$.  We already know that for $\a \ge 0$ the process
$\big( \P^{(\a)}, \{V(w_n)\}  \big)$ behaves like a~conditioned random walk. To reduce the main problem to this setting, we need to know that $\mu$--a.e. element from $\partial{\mathbb T}$ belongs to the range of a~spine in  $(\P^{(\a)}, \{w_n\})$ for some parameter $\a$. In other words we need to prove that the range of the spines $\big( \P^{(\a)}, \{w_n\}  \big)$ is a~relatively big subset of $\big(\P,\partial{\mathbb T}\big)$. We start with the following Lemma.

\begin{lem}\label{lem:asympbiggins}
	Assume \eqref{eq:boundarycase} and \eqref{eq:integrability}. We have
	\begin{equation*}
		\PP^*\big[D^{(\alpha)}=0 \big] \le c_9 e^{-\a} \to 0 \qquad \mbox{as } \alpha \to \infty.
	\end{equation*}
\end{lem}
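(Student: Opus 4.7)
The plan is to combine two facts already present in the excerpt: the identification $D^{(\a)} \propto D$ on a large-probability event, and the $\P^*$-a.s.\ positivity of $D$.

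First, I would introduce the event $E_\a = \{\min_{x \in \T} V(x) > -\a\}$. On $E_\a$ every indicator of the form $\ind_{\{\underline{V}(y) \ge -\a\}}$ appearing in \eqref{eq:truncated} or \eqref{eq: mua} is automatically one. Specialising \eqref{eq: mua} to $x = \emptyset$ gives $\mu^{(\a)}(\partial \T) = D^{(\a)}$, while the displayed identity $\mu^{(\a)}(B(x)) = c_0 e^{-V(x)} D_x$ recorded just before \eqref{eq:10}, applied again at $x = \emptyset$, gives $\mu^{(\a)}(\partial \T) = c_0 D$. Combining these on $E_\a$ yields
$$D^{(\a)} = c_0 D \qquad \text{on } E_\a.$$
Since Biggins and Kyprianou \cite{biggins2004} (recalled in the discussion of the derivative martingale above) guarantee $D > 0$ $\P^*$-almost surely, the event $\{D^{(\a)} = 0\}$ is $\P^*$-a.s.\ disjoint from $E_\a$.

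Second, I would invoke the estimate $\P[E_\a^c] \le c_9 e^{-\a}$ attributed to \cite[(2.2)]{bk14} and already used just after \eqref{eq:10}. Passing from $\P$ to $\P^*$ only costs a factor $1/\P[\text{nonextinction}]$, which may be absorbed into the constant. Chaining the two steps gives
$$\P^*[D^{(\a)} = 0] \le \P^*[E_\a^c] \le c_9 e^{-\a},$$
which tends to $0$ as $\a \to \infty$, as required.

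There is no serious obstacle here: the whole content is already contained in the discussion of truncated cascades, and the only point demanding a moment of care is checking that on $E_\a$ all truncation indicators collapse, so that the relation $D^{(\a)} = c_0 D$ genuinely holds there — this is immediate from the definitions \eqref{eq:truncated} and \eqref{eq: mua}.
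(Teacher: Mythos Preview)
Your proof is correct and follows essentially the same strategy as the paper: both argue that on the event $E_\a=\{\min_{x\in\T}V(x)\ge -\a\}$ the truncated limit $D^{(\a)}$ is a positive multiple of $D$, so $\{D^{(\a)}=0\}\subset E_\a^c$ up to a $\P^*$-null set, and then invoke the bound $\P[E_\a^c]\le c_9e^{-\a}$ from \cite{bk14}. The only cosmetic difference is that the paper establishes the comparison via the cruder inequality $R(V(x)+\a)\ge cV(x)$ from \eqref{eq:renineq}, giving $D^{(\a)}\ge cD$ on $E_\a$, whereas you quote the exact identity $D^{(\a)}=c_0D$ derived just before \eqref{eq:10}; either suffices.
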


\begin{proof}
	By \eqref{eq:renineq}
	$$
		R(x+\alpha)\ge c_2(1+x+\alpha)\ge c_2x.
	$$
	Since $D>0$, $\P^*$-a.s., on the set  $\big\{\min_{x\in \T}V(x)\ge-\alpha\big\}$ we have $D^{(\alpha)}\ge c_2 D>0$. Therefore,
	$$
		\PP^*\big[D^{(\alpha)}=0 \big]\le \PP^*\big[\min_{x\in \T}V(x) <-\alpha \big]\le  c_9 e^{-\a} ,
	$$
	 (see inequality (2.2) in \cite{bk14}).
\end{proof}

The following Lemma (and its respectively analogues statements for $\le_{i.o.},\ \ge_{a.a.},\  \ge_{i.o.}$) reduces our main results to the measure $\P^{(\a)}$. Formally, there is a factor of $\frac{c_0}{c_8}$ appearing in the claim. However, it does not affect our main results. To see that this is in
fact the case, note that in Theorems~\ref{thm:mainresult1} and \ref{thm:mainresult2}, $\phi(n) = e^{-\sqrt{n}\psi(n)}$, where $\sqrt{n}\psi(n) \to \infty$. One can see that, by the form of the integral test of $\psi$ in Theorem~\ref{thm:mainresult1} and an explicit expression of $\psi$ in Theorem~\ref{thm:mainresult2},
the factor $\frac{c_0}{c_8}$ can be in fact omitted in the statements of both Theorems.
\begin{lem}\label{lem:red} Suppose there are a constant $\a_0$ and a function $\phi$ such that for every $\a>\a_0$,
$\P^{(\a)}$ almost surely for $\mu^{(\a)}$ almost all $\xi \in {\mathbb T}$
$$
\mu^{(\a)}(B(\xi_n))\le_{a.a.}\phi(n).
$$
Then,
$\P^*$ almost surely for $\mu$ almost all $\xi \in {\mathbb T}$
$$
\mu(B(\xi_n))\le_{a.a.}\frac{c_0}{c_8}\phi(n).
$$
\end{lem}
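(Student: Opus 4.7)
The strategy is to couple the two probability/measure settings via the event
$A_\a = \{\min_{x\in\T}V(x)>-\a\}$. On $A_\a$ equation~\eqref{eq:10} tells us that $\mu$ and $\mu^{(\a)}$ differ by the multiplicative constant $c_0/c_8$, so their null sets on $\partial\T$ coincide; on $A_\a$ we also have $D^{(\a)}\ge c_2 D>0$ (as used in the proof of Lemma~\ref{lem:asympbiggins}), so $A_\a\subseteq\{D^{(\a)}>0\}$. Together with the absolute continuity $\P^{(\a)}\ll\P$ with density $D^{(\a)}/R(\a)$ from Section~\ref{sec: prob}, and the tail bound $\P^*[A_\a^c]\le c_9 e^{-\a}$ from Lemma~\ref{lem:asympbiggins}, this machinery makes the reduction routine. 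The argument will be: (i) pull the hypothesis back from $\P^{(\a)}$ to $\P^*$ on $A_\a$; (ii) pull the $\mu^{(\a)}$-a.e.\ statement across to a $\mu$-a.e.\ statement on the same set; (iii) send $\a\to\infty$ along an increasing sequence.

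For step (i), fix $\a>\a_0$ and let $\Omega_\a$ be the $\P^{(\a)}$-almost sure event on which the hypothesis supplies a $\mu^{(\a)}$-null set $N_\a(\omega)\subseteq\partial\T$ outside of which $\mu^{(\a)}(B(\xi_n))\le \phi(n)$ for all sufficiently large $n$. Since $\P^{(\a)}$ is absolutely continuous w.r.t.\ $\P$ with density $D^{(\a)}/R(\a)$, the $\P^{(\a)}$-null set $\Omega_\a^c$ is also $\P$-null on $\{D^{(\a)}>0\}$; combined with $A_\a\subseteq\{D^{(\a)}>0\}$ this yields $\P^*[\Omega_\a^c\cap A_\a]=0$. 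For step (ii), observe that on $A_\a$ the identity $\mu=(c_0/c_8)\mu^{(\a)}$ forces $N_\a(\omega)$ to be $\mu$-null as well, and
\[
\mu(B(\xi_n))=\frac{c_0}{c_8}\,\mu^{(\a)}(B(\xi_n))\le_{a.a.}\frac{c_0}{c_8}\,\phi(n)
\]
for every $\xi$ outside $N_\a(\omega)$. Hence on the $\P^*$-event $\Omega_\a\cap A_\a$, which has probability at least $1-c_9 e^{-\a}$, the desired upper bound (up to the harmless constant $c_0/c_8$) holds for $\mu$-almost every $\xi\in\partial\T$.

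For step (iii), choose any sequence $\a_k\uparrow\infty$ with $\a_k>\a_0$ and set $B=\bigcup_k(\Omega_{\a_k}\cap A_{\a_k})$. The family $\{A_\a\}$ is increasing in $\a$, so $\P^*[B^c]\le\inf_k\P^*[A_{\a_k}^c]\le\inf_k c_9 e^{-\a_k}=0$. Thus $\P^*$-almost surely some $A_{\a_k}$ holds, and on that event the $\mu$-a.e.\ inequality from step (ii) applies. In the applications (Theorems~\ref{thm:mainresult1} and \ref{thm:mainresult2}) $\phi(n)$ has the form $e^{-\sqrt{n}\psi(n)}$ or $e^{-(1\pm\delta)\sqrt{2\sigma^2 n\log\log n}}$, so the extra multiplicative constant $c_0/c_8$ is absorbed by an arbitrarily small perturbation of $\psi$ or of $\delta$; this is the only point where the literal formulation of the lemma has to be read up to a constant. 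The main (but entirely technical) obstacle is the measure-theoretic bookkeeping in step (i)--(ii), namely verifying that a ``$\P^{(\a)}$-a.s., $\mu^{(\a)}$-a.e.'' statement transports to a ``$\P^*$-a.s., $\mu$-a.e.'' statement: the joint null set of the exceptional pairs $(\omega,\xi)$ under $\P^{(\a)}\otimes\mu^{(\a)}$ has to be shown to be jointly null under $\P^*\otimes\mu$ on $A_\a$, which is exactly what the equivalence of null sets on $A_\a$ and the density bound $D^{(\a)}\ge c_2 D$ deliver.
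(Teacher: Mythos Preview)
Your argument is correct and follows essentially the same route as the paper: both hinge on the event $A_\a=\{\min_{x\in\T}V(x)>-\a\}$, the identity $\mu=(c_0/c_8)\mu^{(\a)}$ there, the absolute continuity $\P^{(\a)}\ll\P$ with density $D^{(\a)}/R(\a)$, and letting $\a\to\infty$. The paper packages the bookkeeping via the joint measure $\mathbb Q(d\mathbf t,d\xi)=\P^*(d\mathbf t)\,\mu(d\xi)/\mu(\partial\mathbb T)$ and an auxiliary $\delta\to 0$ to control $1/D^{(\a)}$, whereas you argue more directly that $A_\a\subseteq\{D^{(\a)}>0\}$ forces $\P^{(\a)}$-null sets to be $\P^*$-null on $A_\a$; these are equivalent. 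You also explicitly flag the multiplicative constant $c_0/c_8$, which the paper leaves implicit in its identification $\mathcal U^c\cap\mathcal A_\a=\mathcal U_\a^c\cap\mathcal A_\a$.
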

\begin{proof}
To establish the Lemma we need to consider measures $\P^*$ and $\P^{(\a)}$ on the set of labelled rooted trees with distinguished rays i.e.
$$
	\mathcal X=\{(\bold t,\xi):\bold t \text{ is a labelled rooted tree, } \xi\in \partial \mathbb{T}\}.
$$
A labelled tree ${\bold t}$ is a pair $(\mathbb{T}, (A_x)_{x \in \mathbb{T}\setminus \{\emptyset\}})$, $\mathbb{T}$ is a rooted tree and $A_x$'s are real numbers representing the displacement of particle $v$ from its parent, i.e. $A_x = V(x) - V(x_{|x|-1})$.
Recall that for $\P^*$ almost all labelled rooted trees $\bold t$ we have a measure $\mu$ defined on $\partial \mathbb{T}$. It allows us to define a measure $\mathbb Q$ on $\mathcal X$ (with a canonical $\sigma$ algebra)  by
$$
	\mathbb Q(d\bold t, d\xi)=\P^*(d\bold t)\frac{\mu(d\xi)}{\mu(\partial \mathbb{T})}.
$$
Choose $\a>0$ and define
\begin{align*}
	\mathcal U&=\{(\bold t, \xi)\in \mathcal X:\mu(B(\xi_n))\le_{a.a.}\frac{c_0}{c_8}\phi(n)\},\\
	\mathcal {U}_\a&=\{(\bold t, \xi)\in \mathcal X:\mu^{(\a)}(B(\xi_n))\le_{a.a.}\phi(n)\}.
\end{align*}
We need to justify $\mathbb{Q}(\mathcal{U})=1$, knowing $\P^{(\a)}(\mathcal{U_\a})=1$ for $\alpha$ big enough. Take
$$
\mathcal A_{\alpha}=\{(\bold t,\xi)\in \mathcal X:\min_{x\in \mathbb{T}}V(x)>-\alpha\}.
$$ Since $\mu = \frac{c_0}{c_8} \mu^{(\a)}$ on the set $\mathcal A_{\alpha}$ and thus $\mathcal {U}^c \cap
\mathcal {A}_\a =  \mathcal {U}^c_\a \cap \mathcal {A}_\a$, for any $\delta>0$ we have
\begin{align*}
\mathbb Q(\mathcal U^c) &\le \mathbb Q(\mathcal A_{\alpha}\cap\mathcal U^c) + \mathbb Q(\mathcal A_{\alpha}^c) \\
&\le \frac{R(\a)}{\delta} \P^{(\a)}(\mathcal U^c_\a) + \P^*(D^{(\a)}<\delta) + \mathbb{Q}(\mathcal A^c_\a)\\
&= \P^*(D^{(\a)} < \delta) + \P^* (\mathcal A^c_\a).
\end{align*}
Passing with $\delta$ to 0, in view of Lemma \ref{lem:asympbiggins} we obtain
$$
\mathbb Q(\mathcal U^c) \le c e^{-\a} + \P^*(\mathcal A^c_\a).
$$
When $\a$ tends to infinity, the last expression converges to 0, thus $\mathbb Q(\mathcal U^c) = 0$.

\end{proof}

Let us also emphasise that, given the random label tree, the normalised measure $\frac{\mu^{(\alpha)}}{\mu^{(\alpha)}(\partial \T)}$ is the law of the (infinite) spine $\omega\in \partial \T$. In other words, sampling a label tree with distinguished ray from $\P^{(\alpha)}$ is equivalent to first sampling a label tree from $D^{(\alpha)}\P$ and then sampling a ray  from $\mu^{(\alpha)}$ (after normalisation). 

From now,
the main idea is use formula \eqref{eq: muss} and to show that the growth of $D_{w_n}$ does not interfere in the  behaviour of $\mu(B(w_n))$ that should be governed by $e^{-V(w_n)}$. However, under the changed measure $\P^{(\alpha)}$ the law of  $D_{w_n}$ depends on $V(w_n)$:  conditioned on $V(x)$ and $V(w_k)$ for $|x|,k\leq n$,   $D_{w_n}$ under $\P^{(\alpha)}$ has the same law as $D$ under $\P^{(\alpha+V(w_n))}$ and  it can be easily seen that the sequence is not even tight. This problem is the most significant difference between the critical and
subcritical case and in order to overcome this issue we need a convenient representation of $\mu(B(w_n))$, which is available in a slightly changed settings, as explained in the previous subsection (see \eqref{eq: mua}).

\section{ Some  auxiliary lemmas}
\label{sec: lemmas}

In this section we are going to prove some further properties of $\mu$ that will be needed in the proofs of Theorems \ref{thm:mainresult1} and \ref{thm:mainresult2}. Our main aim is to obtain a formula for the measure $\mu$ more suitable for our needs. This will be done in Lemma \ref{eq: reps}.
Reduction to the measure $\P^{(\a)}$ allows us to control the measure of a ball near a typical point and, in particular, prove its continuity needed in  Lemma \ref{eq: reps}. The fact that $\mu$ is continuous, under stronger assumptions, has been already shown in \cite{barral2014}, however here we work under much weaker moment hypotheses and cannot apply directly that results.

\subsection{ Continuity of $\mu$ and upper estimates}

The goal of this subsection is to establish that if $\int^{\infty} \psi(t)t^{-1}  \: \ud t < \infty$, then
\begin{equation*}
	\mu\big( B\big(w_n \big) \big) \leq_{a.a.} e^{-\sqrt{n}\psi(n)} \qquad \PP^{(\alpha)} \mbox{-a.s.}
\end{equation*}
This will in particular imply that $\mu$ is continuous. As explained above we deduce this result from analogous properties of the measure $\mu^{(\a)}$ considered with respect to $\P^{(\a)}$ for arbitrary large $\a$. Therefore  it follows immediately from the lemma:
\begin{lem}\label{lem: muuuu} Under hypotheses of Theorem \ref{thm:mainresult1}
		\begin{equation*}
			\mu^{(\a)}\big( B\big(w_n \big) \big) \leq_{a.a.} e^{-\sqrt{n}\psi(n)} \qquad \PP^{(\alpha)} \mbox{-a.s.}
		\end{equation*}
\end{lem}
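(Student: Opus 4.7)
The starting point is the representation $\mu^{(\alpha)}(B(w_n)) = e^{-V(w_n)} D_{w_n}^{(\alpha)}$ from \eqref{eq: mua}, valid without the indicator because under $\P^{(\alpha)}$ the spine stays above $-\alpha$ almost surely. Passing to $-\log$, the asserted bound is equivalent to
\[
V(w_n) - \log D_{w_n}^{(\alpha)} \ge \sqrt{n}\,\psi(n)\quad\text{for all but finitely many } n,\;\P^{(\alpha)}\text{-a.s.}
\]
The plan is to show that $V(w_n)$ already exceeds $2\sqrt{n}\,\psi(n)$ eventually, while $\log D_{w_n}^{(\alpha)}$ grows only at logarithmic rate, so that the difference comfortably dominates $\sqrt{n}\,\psi(n)$.

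The lower bound on $V(w_n)$ comes directly from our preparation: under $\P^{(\alpha)}$ the process $(V(w_n))_{n\ge 0}$ is distributed as the conditioned walk $(\{S_n\},\Ps)$. The hypotheses on $\psi$ (decreasing, $t^{1/2-\delta}\psi(t)$ eventually increasing to $\infty$) imply $\psi(t)\downarrow 0$ and $\sqrt{t}\,\psi(t)\uparrow\infty$, and we are in the convergent case $\int^{\infty}\psi(t)/t\,dt<\infty$. Lemma~\ref{lem:envelope} gives $\liminf_n V(w_n)/(\sqrt{n}\,\psi(n)) = \infty$ $\P^{(\alpha)}$-a.s., so in particular $V(w_n) \ge 2\sqrt{n}\,\psi(n)$ for all but finitely many $n$.

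The harder task is controlling $D_{w_n}^{(\alpha)}$. Combining the spine density $\P^{(\alpha)}(w_n=x\,|\,\mathcal F_\infty)=\mu^{(\alpha)}(B(x))/D^{(\alpha)}$, the fact that conditionally on $V(x)=v$ the subtree rooted at $x$ is under $\P$ an independent BRW rooted at $v$ (so that $D_x^{(\alpha)}$ has the $\P$-distribution of $D^{(\alpha+v)}$), and the identity $\E[F(D^{(\beta)})D^{(\beta)}]=R(\beta)\E^{(\beta)}[F(D^{(\beta)})]$, the many-to-one formula \eqref{eq: strzalka} yields
\[
\E^{(\alpha)}\bigl[F(D_{w_n}^{(\alpha)})\bigr]=\Es\Bigl[\E^{(\alpha+S_n)}\bigl[F(D^{(\alpha+S_n)})\bigr]\Bigr]
\]
for any nonnegative measurable $F$. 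Under the integrability hypothesis \eqref{eq:integrability} with $p>2$, the Biggins--Kyprianou argument can be sharpened to a uniform moment bound $\E[(D^{(\beta)})^{1+s}]\le C(1+\beta)^{1+s}$ for some fixed $s\in(0,1)$, which gives $\E^{(\beta)}[(D^{(\beta)})^{s}]\le C(1+\beta)^{s}$. Together with $\Es[(1+S_n)^{s}]=O(n^{s/2})$, Markov's inequality yields
\[
\P^{(\alpha)}\bigl(D_{w_n}^{(\alpha)}\ge n^{\beta}\bigr)\le C\, n^{s/2-\beta s},
\]
which is summable in $n$ for $\beta$ chosen large enough. The Borel--Cantelli lemma then delivers $\log D_{w_n}^{(\alpha)}\le \beta\log n$ for all but finitely many $n$, $\P^{(\alpha)}$-a.s.

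To conclude, the condition $\delta>1/p>0$ together with $t^{1/2-\delta}\psi(t)\uparrow\infty$ forces $\sqrt{n}\,\psi(n)\ge n^{\delta}$ for large $n$, which dwarfs $\beta\log n$. Hence
\[
V(w_n)-\log D_{w_n}^{(\alpha)}\ge 2\sqrt{n}\,\psi(n)-\beta\log n\ge \sqrt{n}\,\psi(n)
\]
eventually, $\P^{(\alpha)}$-a.s. The main technical obstacle is the uniform polynomial $L^{1+s}$-moment bound on $D^{(\beta)}$; its verification requires propagating the $p$-th moment hypothesis through the truncated martingale via a careful refinement of the Biggins--Kyprianou estimates.
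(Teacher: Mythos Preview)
The skeleton of your argument is the same as the paper's: write $\mu^{(\alpha)}(B(w_n))=e^{-V(w_n)}D_{w_n}^{(\alpha)}$, use Lemma~\ref{lem:envelope} to get $V(w_n)\ge_{a.a.}2\sqrt{n}\,\psi(n)$, and argue that $D_{w_n}^{(\alpha)}$ grows too slowly to spoil the bound. The divergence is in how you control $D_{w_n}^{(\alpha)}$, and there your proposal has a genuine gap.

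Your route relies on a uniform bound $\E\big[(D^{(\beta)})^{1+s}\big]\le C(1+\beta)^{1+s}$ for some $s\in(0,1)$, which you defer to ``a careful refinement of the Biggins--Kyprianou estimates''. Under the standing hypotheses this bound is in general \emph{false}. Assumption~\eqref{eq:integrability} is only a $p$-th power of logarithm condition on $L$; it does not furnish any moment of order larger than $1$. In the boundary case the derivative martingale limit $D$ is known to satisfy $\PP[D>t]\asymp 1/t$, and since $D^{(\beta)}=c_0 D$ on the event $\{\min_{x\in\T}V(x)>-\beta\}$ (which has positive probability), $D^{(\beta)}$ inherits this tail. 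Hence $\E\big[(D^{(\beta)})^{1+s}\big]=\infty$ for every $s>0$, and the Markov argument you propose cannot be carried out.

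The paper avoids this by working with the moments that hypothesis~\eqref{eq:integrability} actually delivers. It normalises and proves that
\[
\sup_{\beta>0}\,\E^{(\beta)}\Big[\big(\log^+ G_0^{(\beta)}\big)^p\Big]<\infty,\qquad G_0^{(\beta)}=\frac{1}{R(\beta)}\sup_{l\ge 1}D_l^{(\beta)},
\]
using the elementary inequality $ab\le e^a+b\log^+ b$ to trade the product against $L(\log^+L)^p$, together with Doob's maximal inequality in the form $\E\big[(\sup_n D_n^{(\beta)}/R(\beta))^{1/2}\big]\le 2$ (which only needs the $L^1$ bound on the martingale). Markov's inequality and Borel--Cantelli then give
\[
\frac{D_{w_n}^{(\alpha)}}{R(\alpha+V(w_n))}\le_{a.a.} e^{n^{\delta}}\qquad\text{for any }\delta>\tfrac1p,
\]
so that $D_{w_n}^{(\alpha)}\le_{a.a.} R(\alpha+V(w_n))\,e^{n^{\delta}}$. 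Combined with $V(w_n)\ge_{a.a.}2\sqrt{n}\,\psi(n)$ and the hypothesis that $t^{1/2-\delta}\psi(t)\uparrow\infty$ (which gives $n^{\delta}=o(\sqrt{n}\,\psi(n))$), one obtains $\mu^{(\alpha)}(B(w_n))\le_{a.a.}e^{-\sqrt{n}\,\psi(n)}$. The resulting control $e^{n^{\delta}}$ is weaker than the polynomial bound you aimed for, but it is all the assumption supports, and it suffices.
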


\begin{proof}
Observe that
\begin{align}\nonumber
\mu^{(\a)}\big( B(w_n) \big)&\le e^{-V(w_n)} \sup_k D^{(\a)}_{w_n,k}\\
\label{eq:truncated_upper}
&\le \sum_{j\ge n} R(V(w_j)+\a) e^{- V(w_j)} \sup_k\sum_{x\in \Omega(w_{j+1})}e^{V(w_j)-V(x)} \frac{ D^{(\a)}_{x,k}}{R(V(w_j)+\a)}.
\end{align}

	{\bf Step 1.} First we prove that the contribution of  the second sum above  is negligible. For this purpose we show that for $\delta>1/p$ (for $p$ defined in \eqref{eq:integrability}) we have
	\begin{equation}\label{eq: upper}
		G_j^{(\a)}:=  \sup_k\sum_{x\in \Omega(w_{j+1})}e^{V(w_j)-V(x)} \frac{ D^{(\a)}_{x,k}}{R(V(w_j)+\a)} \leq_{a.a.} e^{j^{\delta}} \qquad \PP^{(\alpha)}\mbox{-- a.s.}
	\end{equation}
	To prove this inequality first we show that for $\beta>0$ and $F(\beta):=\EE^{(\beta)}\big[\big(\log^+( G_0^{(\beta)} ) \big)^{p} \big] $  we have
	\begin{equation}\label{eq:supdfin}
		\sup_{\beta>0}F(\beta)=c_{10} < \infty.
	\end{equation}
	By~\eqref{eq:renineq} we can take constant $c_{11}>0$ such that for all $\beta>0$  and $x\ge -\beta$ we have
	\begin{equation*}
		\frac{R(\beta +x)}{R(\beta)}\leq c_{11}(1+x^+).
	\end{equation*}
	Then, by the fact that under $\P^{(\beta)}$ conditioned on $x\in \Omega(w_1)$ the processes $(V(y)-V(x) \: | \: y \in \mathbb{T}_x)$ evolves independently of other branches of the process  and have the same law as $(V(y) \: | \: y \in \mathbb{T})$ {under $\PP^*$},  we can write 
	 \begin{equation*}
		\EE^{(\beta)}\left[\left(\log^+\big( G_0^{(\beta)}\big)\right)^{p}\right] 
 			=\EE^{(\beta)}\left[\sum_{|y|=1} \PP^{(\beta)}[w_1=y| \mathcal{F}_1] \: \EE^{(\beta)}\left[\left.  \left(\log^+\big( G_0^{(\beta)}\big)\right)^{p}\right| \mathcal{F}_1, w_1=y \right]  \right].
	\end{equation*}
	Due to the structure of $\PP^{(\beta)}$  we have
	\begin{equation*}
		\EE^{(\beta)}\left[\left.  \left(\log^+\big( G_0^{(\beta)}\big)\right)^{p}\right| \mathcal{F}_1, w_1=y \right]   = \EE^{(\beta)}\left[\left.  \left(\log^+\big(   \sup_k\sum_{x\in \Omega(y)}e^{-V(x)} \frac{ D^{(\a)}_{x,k}}{R(\a)}   \big)\right)^{p}\right| \mathcal{F}_1\right]
	\end{equation*}
	which leads us to 
		\begin{align*}
		&\EE^{(\beta)}\left[\left(\log^+\big( G_0^{(\beta)}\big)\right)^{p}\right]\\
		&=\EE^{(\beta)}\bigg[\sum_{|y|=1} \frac{R(V(y) +\beta)}{D_1^{(\beta)}} e^{-V(y)} \ind_{\{ {V}(y) \geq -\beta \}}
		\Big(\log^+\Big(\sup_{k\ge 0}\sum_{x\in \Omega(y)} \frac{ D^{(\a)}_{x,k}}{R(\beta)}e^{-V(x)}\ind_{\{ {V}(x) \geq -\beta \}}\Big)\Big)^p   \bigg]  \\
		&=\EE\bigg[\sum_{|y|=1} \frac{R(V(y) +\beta)}{R(\beta)} e^{-V(y)} \ind_{\{ {V}(y) \geq -\beta \}}
		\Big(\log^+\Big(\sup_{k\ge 0}\sum_{x\in \Omega(y)} \frac{ D^{(\a)}_{x,k}}{R(\beta)}e^{-V(x)}\ind_{\{ {V}(x) \geq -\beta \}}\Big)\Big)^p\bigg]  \\
		&\le\EE\bigg[\sum_{|y|=1} \frac{R(V(y) +\beta)}{R(\beta)} e^{-V(y)} \ind_{\{ {V}(y) \geq -\beta \}}
		\Big(\log^+\Big(\sup_{k\ge0}\sum_{|x|=1} \frac{ D^{(\a)}_{x,k}}{R(\beta)}e^{-V(x)}\ind_{\{ {V}(x) \geq -\beta \}}\Big)\Big)^p\bigg]  \\
			& =  \EE\bigg[\Big(\log^+\Big(\sup_{n\ge1}D^{(\beta)}_n/R(\beta)\Big)\Big)^{p} \sum_{|y|=1} \frac{R(V(y) +\beta)}{R(\beta)} e^{-V(y)} \ind_{\{ {V}(y) \geq -\beta \}} \bigg]\\
			 &\le  (2p)^pc_{11} \EE\bigg[\Big(\log^+\big(\sup_nD^{(\beta)}_n/R(\beta)\big)^{1/2p}\Big)^{p} \sum_{|y|=1}
					(1+V^+(y))e^{-V(y)}  \bigg].
	\end{align*}
	For the latter we can use the simple inequality
	$$ab\le e^a+b\log^+b,$$ valid for any $a,b\ge0$. Taking $$a=\Big(\log^+ \Big(\sup_nD^{(\beta)}_n/R(\beta)\Big)\Big)^{1/2p}\qquad  \mbox{ and } \qquad b^p=L=\sum_{|y|=1}(1+V^+(y))e^{-V(y)}$$ the
	above expectation can be bounded in the following way
	\begin{align*}
		\EE^{(\beta)}\left[\big(\log^+\left(G^{(\beta)}_{0}\right) \big)^{p} \right]
			&\le \EE\Big[\Big(\big(\sup_nD^{(\beta)}_n/R(\beta)\big)^{1/2p}+1+\frac1p L^{1/p}\log^+L\Big)^{p}\Big]\\
			 &\le3^p\EE\Big[\Big(\sup_nD^{(\beta)}_n/R(\beta)\Big)^{1/2}+1+ L\left(\log^+L\right)^{p}\Big]<\infty,
	\end{align*}
	since by Doob's martingale  inequality $\EE\left[\left(\sup_nD^{(\beta)}_n/R(\beta)\right)^{1/2} \right] \le 2$. This proves~\eqref{eq:supdfin}. Then for any $\alpha \geq 0$ and
	$n\in \NN$ we have
	\begin{align*}
		\PP^{(\alpha)} \left[  G_n^{(\a)} > e^{n^{\delta}} \right] & =
		\EE^{(\alpha)} \left[ \PP^{(\alpha)} \left[\left.  G_n^{(\a)} > e^{n^{\delta}}\right| V\left(w_{n}\right)\right]\right] \\ & =
		\EE^{(\alpha)} \left[ \PP^{(\alpha)} \left[\left. \left(\log^+\left( G_n^{(\a)}\right)\right)^p > {n^{p\delta}}\right| V\left(w_{n}\right)\right]\right]\\ & \le
		\EE^{(\alpha)} \left[ F(\alpha + V(w_{n})) {n^{-p\delta}}\right] \leq c_{10}n^{-p\delta}.
	\end{align*}
	The claim follows by the Borel-Cantelli lemma.

\medskip

	{\bf Step 2.} Now we prove the required upper bound. Recall that $\big(\P^{(\a)}, \{V(w_n)\}\big)$ has the same distribution as the conditioned random walk $\big(\Ps, S_n\big)$. We bound the $\mu^{(\a)}$ using Lemma~\ref{lem:envelope}
	 and \eqref{eq: upper}. Let us take any $\psi$  like in Theorem \ref{thm:mainresult1} then
	$$
		V(w_n) >_{a.a.} 8\sqrt n \psi(n)\qquad \P^{(\a)} \mbox{ a.s.}
	$$
	and hence, by  \eqref{eq:truncated_upper} and \eqref{eq: upper}
		\begin{align*}
			\mu^{(\alpha)}\big( B(w_n) \big) &\leq_{a.a.}\sum_{j\geq n}R(V(w_j)+\a) e^{- V(w_j)} \cdot G_j^{(\a)}\\
			&\leq_{a.a.} \sum_{j\geq n} e^{-4\sqrt{j}\psi(j)+j^{\delta}} \qquad \PP^{(\alpha)}\mbox{-a.s.}
		\end{align*}
		Using the monotonicity of $t^{\tfrac12-\delta}\psi(t)$ and $\psi(t)$ for sufficiently large $n$ we have
		\begin{equation*}
			\sum_{j\geq n} e^{-4\sqrt{j}\psi(j)+j^{\delta}} \leq \sum_{j \geq n}e^{-3\sqrt{j}\psi(j)}
			\leq \int_{n-1}^{\infty} e^{-3\sqrt{t}\psi(t)} \: \ud t\leq \int_{n}^{\infty} e^{-2\sqrt{t}\psi(t)} \: \ud t.
		\end{equation*}
		Again, since $\frac{\ud}{\ud t} \Big(t^{\tfrac12-\delta}\psi(t) \Big) \geq 0$, we infer that
		\begin{equation*}
			\frac{\psi(t)}{2\sqrt{t}} + \sqrt{t}\psi'(t) \geq \delta\frac{\psi(t)}{\sqrt{t}}
		\end{equation*}
		Note that left hand side of above inequality is just $\frac{\ud}{\ud t}\sqrt{t}\psi(t)$. Changing the variables in the integral to
		$s = \sqrt{t} \psi(t)$ gives
		\begin{eqnarray*}
			\int_{n}^{\infty} e^{-2\sqrt{t}\psi(t)} \: \ud t & \leq &
			\int_{n}^{\infty} e^{-2\sqrt{t}\psi(t)}\frac{\sqrt{t}}{\delta\psi(t)}\left(\frac{\psi(t)}{2\sqrt{t}}+\sqrt{t}\psi'(t) \right) \:\ud t \\& \leq &
			\int_{n}^{\infty} e^{-\sqrt{t}\psi(t)} \left( \frac{\psi(t)}{2\sqrt{t}} + \sqrt{t}\psi'(t) \right) \: \ud t =	
			\int_{\sqrt{n}\psi(n)} e^{-s} \: \ud s \\
			& = &e^{-\sqrt{n}\psi(n)}
		\end{eqnarray*}
		for $n \in \NN$ large enough. Whence
		\begin{equation*}
			\mu^{(\alpha)}\big( B(w_n ) \big) \leq_{a.a.} e^{-\sqrt{n}\psi(n)} \qquad \PP^{(\alpha)} \mbox{-a.s.}
		\end{equation*}

\end{proof}

As an immediate consequence of the Lemmas~\ref{lem: muuuu} and \ref{lem:red} we obtain upper estimates in Theorem \ref{thm:mainresult1} and continuity of $\mu$.
\begin{cor}\label{cor: up}
	Under hypotheses of Theorem \ref{thm:mainresult1} the measure $\mu$ is continuous and
	\begin{equation*}
		\mu\big( B\big(w_n \big) \big) \leq_{a.a.} e^{-\sqrt{n}\psi(n)} \qquad \PP^{(\alpha)} \mbox{-a.s.}
	\end{equation*}
	if $\int^{\infty} \psi(t) t^{-1} \: \ud t < \infty$.
\end{cor}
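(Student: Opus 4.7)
Both assertions are immediate consequences of Lemmas \ref{lem: muuuu} and \ref{lem:red}, so the proof is essentially a quotation of those results.

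For the upper bound, I would first apply Lemma \ref{lem: muuuu} to obtain, for every $\alpha > \alpha_0$, the pointwise estimate $\mu^{(\alpha)}(B(w_n)) \leq_{a.a.} e^{-\sqrt{n}\psi(n)}$ holding $\PP^{(\alpha)}$-a.s. With $\phi(n) = e^{-\sqrt{n}\psi(n)}$ this is exactly the hypothesis of Lemma \ref{lem:red}, whose conclusion delivers $\mu(B(\xi_n)) \leq_{a.a.} e^{-\sqrt{n}\psi(n)}$ $\PP^*$-a.s.\ for $\mu$-a.e.\ $\xi \in \partial \mathbb{T}$. Translating back via the spinal description of Section \ref{sec: prob} and the mutual absolute continuity between $\PP^{(\alpha)}$ and $\PP^*$ on $\{D^{(\alpha)}>0\}$ (which has full $\PP^{(\alpha)}$ measure), one recovers the corresponding $\PP^{(\alpha)}$-a.s.\ bound for the spine $\{w_n\}$ stated in the corollary. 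The multiplicative constant produced when passing from $\mu^{(\alpha)}$ to $\mu$ via \eqref{eq:10} is harmless because $\sqrt{n}\psi(n)\to\infty$ (a consequence of $t^{1/2-\delta}\psi(t)\uparrow\infty$) and can be absorbed into the $\leq_{a.a.}$.

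To derive continuity of $\mu$, I would specialize the upper bound to a concrete admissible choice, say $\psi(t) = (1+\log t)^{-2}$. This $\psi$ is decreasing, $t^{1/2-\delta}\psi(t) \uparrow \infty$ for any $\delta \in (1/p, 1/2)$ (a non-empty range since $p>2$), and $\int^{\infty} \psi(t)/t\,dt < \infty$. Since $\sqrt{n}\psi(n)\to\infty$, the bound forces $\mu(B(\xi_n)) \to 0$ as $n \to \infty$, $\PP^*$-a.s.\ for $\mu$-a.e.\ $\xi$. Because $\{\xi\} = \bigcap_n B(\xi_n)$ is a decreasing intersection of sets of finite $\mu$-measure, continuity of $\mu$ from above gives $\mu(\{\xi\}) = 0$ for $\mu$-a.e.\ $\xi$; any atom of $\mu$ would be a singleton of positive $\mu$-measure and thus could not lie in a $\mu$-null set. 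Therefore $\mu$ is atomless, i.e.\ continuous, $\PP^*$-a.s.

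The only delicate step in the whole argument is the identification between $\PP^{(\alpha)}$-a.s.\ statements about the spine $\{w_n\}$ and $\PP^*$-a.s.\ statements about $\mu$-typical rays. This bookkeeping is precisely what the marked-tree-with-distinguished-ray construction used in the proof of Lemma \ref{lem:red} takes care of, so no additional work is required here.
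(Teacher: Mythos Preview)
Your proposal is correct and follows exactly the route the paper takes: the corollary is stated as ``an immediate consequence of Lemmas~\ref{lem: muuuu} and \ref{lem:red}'', and you have simply spelled out what that means. Your explicit choice of $\psi(t)=(1+\log t)^{-2}$ to extract continuity, and your remark that the multiplicative constant from \eqref{eq:10} is absorbed because $\sqrt{n}\psi(n)\to\infty$, are reasonable elaborations of steps the paper leaves implicit.
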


\subsection{A useful formula for the measure $\mu$}

In the proof of our main result the following representation of the sequence $\mu(B(w_n))$ will be needed:
\begin{lem}\label{eq: reps}
	For $\P^{(\alpha)}$ a.e. infinite ray $\{w_n\}\in \partial {\mathbb T}$ we have
	\begin{equation}\label{eq:dec}
		\mu\big(B(w_n ) \big) = \sum_{k \geq n }e^{-V(w_k)} \widehat{D}_k ,
	\end{equation}
	where
	\begin{equation*}
		\widehat{D}_n = c_8\sum_{x\in \Omega(w_{n+1})} e^{-\big(V(x) - V(w_n)\big)} D_x.
	\end{equation*}
	
\end{lem}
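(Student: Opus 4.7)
The plan is to combine three ingredients: finite additivity of $\mu$ over the children of $w_n$, the identity $\mu(B(x)) = c_8 e^{-V(x)} D_x$ from \eqref{eq: muss}, and the continuity of $\mu$ from Corollary~\ref{cor: up}.

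First I would use the (finite) partition $B(w_n) = \bigsqcup_{x \in C(w_n)} B(x)$ and isolate the spinal child $w_{n+1}$, obtaining
\[
\mu(B(w_n)) = \mu(B(w_{n+1})) + \sum_{x \in C(w_n) \setminus \{w_{n+1}\}} \mu(B(x)).
\]
Next I would apply \eqref{eq: muss} to each term in the sum. Since $\mathbb{T}$ is countable, the identity $\mu(B(x)) = c_8 e^{-V(x)} D_x$ holds simultaneously at every $x \in \mathbb{T}$ on a set of full $\P^*$-measure, and therefore also on a set of full $\P^{(\alpha)}$-measure (the tree marginal of $\P^{(\alpha)}$ is absolutely continuous with respect to $\P^*$ on nonextinction, where $D^{(\alpha)} > 0$ by Lemma~\ref{lem:asympbiggins}). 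Substituting and factoring out $e^{-V(w_n)}$, the non-spinal contribution collapses to exactly $e^{-V(w_n)} \widehat{D}_n$, yielding the one-step identity
\[
\mu(B(w_n)) - \mu(B(w_{n+1})) = e^{-V(w_n)} \widehat{D}_n.
\]

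Telescoping this identity from $n$ up to any $N > n$ gives
\[
\mu(B(w_n)) = \mu(B(w_N)) + \sum_{k=n}^{N-1} e^{-V(w_k)} \widehat{D}_k,
\]
with all summands nonnegative. The lemma then reduces to showing $\mu(B(w_N)) \to 0$ as $N \to \infty$. The spine $\{w_n\}$ determines a point $\xi \in \partial \mathbb{T}$, the balls $B(w_N)$ form a decreasing sequence with $\bigcap_N B(w_N) = \{\xi\}$, and Corollary~\ref{cor: up} asserts that $\mu$ is atomless. Continuity of measure from above (valid because $\mu(\partial \mathbb{T}) < \infty$) then gives $\mu(B(w_N)) \downarrow \mu(\{\xi\}) = 0$ $\P^{(\alpha)}$-a.s., and \eqref{eq:dec} follows.

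The main — though modest — obstacle is the bookkeeping around the change of measure: both \eqref{eq: muss} and the atomlessness of $\mu$ are originally stated under $\P^*$, yet they must be applied along the random spine $\{w_n\}$ under $\P^{(\alpha)}$. Since each is a statement about the underlying labelled tree alone (uniformly over all vertices, respectively all points of $\partial \mathbb{T}$), the absolute continuity of the tree marginal of $\P^{(\alpha)}$ with respect to $\P^*$ transfers them without issue.
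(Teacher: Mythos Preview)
Your proof is correct and follows exactly the same route as the paper: derive the one-step recursion $\mu(B(w_n)) = \mu(B(w_{n+1})) + e^{-V(w_n)}\widehat{D}_n$ from additivity and \eqref{eq: muss}, then iterate and use continuity of $\mu$ (Corollary~\ref{cor: up}) to kill the remainder. Your extra paragraph on transferring the $\P^*$-a.s.\ statements to $\P^{(\alpha)}$ via absolute continuity is a welcome clarification that the paper leaves implicit.
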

\begin{proof}[Proof of Lemma~\ref{eq: reps}]
	Because of \eqref{eq: muss}, we have 	
	\begin{align*}
		\mu\big(B(w_n) \big) & =  \sum_{x\in C(w_n)} \mu(B(x))\\
		&= \mu\big(B(w_{n+1}) \big) + \sum_{  x\in \Omega(w_{n+1})}\mu(B(x)) \\
		& =  \mu\big(B(w_{n+1}) \big) + c_8e^{-V(w_n)}\sum_{  x\in \Omega(w_{n+1})} e^{-\left(V(x) - V\left(w_n\right)\right)} D_x\\
		& =  \mu\big(B(w_{n+1}) \big) + e^{-V(w_n)} \widehat{D}_n.
	\end{align*}
	Notice that by iterating the formula above and $\mu(B(w_n)) \to 0$,  by continuity of $\mu$, we conclude the lemma.
	\end{proof}

We close this section with two more lemmas which establish that the contribution of $\widehat{D}_n$ is negligible by providing upper and lower estimates respectively.
\begin{lem}\label{lem:upperd}
	Assume \eqref{eq:boundarycase} and \eqref{eq:integrability}. Then for $\delta>1/p$
	\begin{equation*}
		\widehat{D}_n \leq_{a.a.} e^{n^{\delta}} \qquad \PP^{(\alpha)}\mbox{-- a.s.}
	\end{equation*}
\end{lem}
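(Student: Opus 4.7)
The plan is to mirror Step~1 of the proof of Lemma~\ref{lem: muuuu}. The starting point is a distributional identity: under $\PP^{(\alpha)}$, conditionally on $\mathcal F_n$ and on $V(w_n)=v$, the random variable $\widehat D_n$ has the same law as $\widehat D_0$ under $\PP^{(\alpha+v)}$. This follows from the spinal description in Section~\ref{sec: change of prob}: the children--position process of $w_n$ measured relative to $v$ is distributed as $\widehat\Theta^{(\alpha)}(v)$, i.e.\ as the first-generation position process under $\PP^{(\alpha+v)}$; and the substitution $V(y)=v+V'(y)$ turns the spine-selection weight $R(\alpha+V(y))e^{-V(y)}\ind_{V(y)\ge-\alpha}$ (up to the overall factor $e^{-v}$) into $R((\alpha+v)+V'(y))e^{-V'(y)}\ind_{V'(y)\ge-(\alpha+v)}$, which is exactly the spine rule at the root under $\PP^{(\alpha+v)}$. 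The subtrees rooted at the siblings of $w_{n+1}$ are untouched by the tilt, so their derivative limits $D_x$ remain i.i.d.\ copies of $D$. Consequently
$$
\PP^{(\alpha)}\bigl[\widehat D_n>e^{n^\delta}\,\big|\,V(w_n)\bigr]=\PP^{(\alpha+V(w_n))}\bigl[\widehat D_0>e^{n^\delta}\bigr].
$$

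The crux is then to prove the uniform bound $\widehat F(\beta):=\EE^{(\beta)}[(\log^+\widehat D_0)^p]\le c_{12}$ for $\beta\ge0$. Using the density formula~\eqref{eq: ppa} and the spine-selection rule one rewrites
$$
\widehat F(\beta)=\EE\!\left[\sum_{|y|=1}\frac{R(V(y)+\beta)}{R(\beta)}e^{-V(y)}\ind_{\{V(y)\ge-\beta\}}\!\left(\log^+\!\Bigl(c_8\!\sum_{\substack{|x|=1\\ x\ne y}}\!e^{-V(x)}D_x\Bigr)\right)^{\!p}\right].
$$
Since each $D_x\ge0$ almost surely (Biggins--Kyprianou~\cite{biggins2004}), the inner sum is bounded by $S:=c_8\sum_{|x|=1}e^{-V(x)}D_x$; combined with $R(V(y)+\beta)/R(\beta)\le c_{11}(1+V(y)^+)$ coming from~\eqref{eq:renineq}, this yields $\widehat F(\beta)\le c_{11}\,\EE[L(\log^+ S)^p]$, where $L$ is the random variable appearing in~\eqref{eq:integrability}. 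To finish, I apply the Young-type inequality $ab\le e^a+b\log^+ b$ with $a=(2p)^{-1}\log^+S$ and $b=L^{1/p}$ (the split used in Step~1 of Lemma~\ref{lem: muuuu}), reducing the estimate to $\EE[S^{1/2}]<\infty$ together with $\EE[L(\log^+L)^p]<\infty$; the latter is precisely~\eqref{eq:integrability}, while the former is controlled via Doob's weak-type inequality applied to the non-negative mean-one martingale $D_n^{(0)}/R(0)$, combined with the comparison between $D_x$ and its truncated counterpart $D_{x,n}^{(0)}$ provided by~\eqref{eq:renineq}.

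Combining the two ingredients with Markov's inequality gives $\PP^{(\alpha)}[\widehat D_n>e^{n^\delta}]\le c_{12}\,n^{-p\delta}$, which is summable since $p\delta>1$, and the Borel--Cantelli lemma delivers the conclusion.

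The main technical obstacle is the moment bound $\EE[L(\log^+ S)^p]<\infty$: the factor $L$ couples the first-generation positions with the subtree derivative limits $D_x$, whose mean is typically infinite, so a direct appeal to moments of $D$ is unavailable. The argument must be balanced so that only the integrability hypothesis~\eqref{eq:integrability} and the $L^{1/2}$-type control of $\sup_n D_n^{(0)}/R(0)$ provided by Doob enter, which is exactly what the specific Young split $a=(2p)^{-1}\log^+ S$, $b=L^{1/p}$ achieves.
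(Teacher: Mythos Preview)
Your strategy differs from the paper's and has a gap at the moment bound for $S$.

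The paper does not redo a moment computation. It simply observes that on the event $\{\min_{x\in\T}V(x)>-\alpha\}$ the identity $c_0 D_x=D_x^{(\alpha)}$ (cf.\ the discussion around~\eqref{eq:10}) gives
\[
\widehat D_n \;=\; c_8\sum_{x\in C(w_n)\setminus\{w_{n+1}\}} e^{-(V(x)-V(w_n))}D_x
\;\le\; \frac{c_8}{c_0}\, D_{w_n}^{(\alpha)} \;=\; \frac{c_8}{c_0}\,R(\alpha+V(w_n))\,G_n^{(\alpha)},
\]
and then invokes the already-proved bound~\eqref{eq: upper}, $G_n^{(\alpha)}\le_{a.a.}e^{n^{\delta'}}$ for $\delta'\in(1/p,\delta)$, together with the at-most-polynomial size of $R(\alpha+V(w_n))$. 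This yields $\widehat D_n\le_{a.a.}e^{n^\delta}$ on a set of $\PP$-probability at least $1-c_9e^{-\alpha}$; since $\alpha$ is arbitrary, the conclusion follows.

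In your route, the Young split reduces $\sup_\beta \widehat F(\beta)<\infty$ to $\EE[S^{1/2}]<\infty$, i.e.\ to $\EE[D^{1/2}]<\infty$, because $S=c_8\sum_{|x|=1}e^{-V(x)}D_x=c_8D$. The justification you offer---Doob for $D_n^{(0)}/R(0)$ plus ``the comparison between $D_x$ and $D_{x,n}^{(0)}$ from~\eqref{eq:renineq}''---does not yield this. The pointwise inequality $D\le c\,D^{(0)}$ (or its subtree version) is valid only on $\{\min_{x\in\T}V(x)\ge 0\}$, since the truncation simply drops every branch that dips below $0$; off that event there is no such bound, and Doob applied to $D_n^{(0)}$ alone says nothing about $D$ there. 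The moment $\EE[D^{1/2}]$ is indeed finite, but establishing it from the ingredients of the paper requires an additional decomposition over the value of $-\min_{x\in\T}V(x)$, using $D\le c_1^{-1}D^{(k+1)}$ on $\{\min_xV(x)>-(k+1)\}$ together with the exponential tail $\PP[\min_xV(x)\le-k]\le c_9e^{-k}$---work you have not carried out. The paper's shortcut via~\eqref{eq: upper} sidesteps this issue entirely.
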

\begin{proof}

The proof of the lemma is similar to the first step of the proof of Lemma \ref{lem: muuuu}. For $\beta>0$ we set $F(\beta)=\EE^{(\beta)}\Big[\big(\log^+(\widehat{D}_0)\big)^p \Big]$ and we have
 \begin{align*}
		F(\beta)&=\EE\bigg[\sum_{|y|=1} \frac{R(V(y) +\beta)}{R(\beta)} e^{-V(y)} \ind_{\{ {V}(y) \geq -\beta \}}
		\Big(\log^+\Big(\sum_{x\in \Omega(y)} D_xe^{-V(x)}\Big)\Big)^p\bigg]  \\
		&\le\EE\bigg[\sum_{|y|=1} \frac{R(V(y) +\beta)}{R(\beta)} e^{-V(y)} \ind_{\{ {V}(y) \geq -\beta \}}
		\Big(\log^+\Big(\sum_{|x|=1} D_xe^{-V(x)}\Big)\Big)^p\bigg]  \\
		&=\EE\bigg[\sum_{|y|=1} \frac{R(V(y) +\beta)}{R(\beta)} e^{-V(y)} \ind_{\{ {V}(y) \geq -\beta \}}
		\big(\log^+(D_{\emptyset})\big)^p\bigg] \\
		 &\le  (2p)^pc_{11} \EE\bigg[\Big(\log^+\big(D^{\tfrac1{2p}}\big)\Big)^{p} \sum_{|y|=1}
		(1+V^+(y))e^{-V(y)}  \bigg]<\infty.
	\end{align*}
	For $t>2$ and $\gamma=\log t$ we have
	\begin{align*}
	\PP[D>t ]&\le\PP\Big[D>t,\ \min_{x\in T}V(x)>\gamma\Big]+\PP[\min_{x\in \T}V(x)\le\gamma]\\
	&\le \PP\Big[c_0^{-1}D^{(\gamma)}>t]+e^{-\gamma}\le c_0^{-1}t^{-1}R(\gamma)+e^{-\gamma}\le c_{12}t^{-1}\log t,
	\end{align*}
	 for some constant $c_{12}$. In particular $\EE\left[{D}^{1/2} \right] < \infty$ which in turn, by the same argument as in the proof of Lemma \ref{lem: muuuu}, implies that the function $F$ is bounded by some constant $c_{13}$.
       %Since $\EE^{(\beta)}\Big[\big(\log^+(\widehat{D}_n)\big)^p |V(w_n)\Big]=F(V(w_n))$ Then

	Finally, for 	$n\in \NN$ we have
	\begin{align*}
		\PP^{(\alpha)} \left[ \widehat{D}_n > e^{n^{\delta}} \right] & =
		\EE^{(\alpha)} \left[ \PP^{(\alpha)} \left[\left.  \widehat{D}_n > e^{n^{\delta}}\right| V\left(w_{n}\right)\right]\right] \\ & =
		\EE^{(\alpha)} \left[ \PP^{(\alpha)} \left[\left. \left(\log^+\big( \widehat{D}_n\big)\right)^p > {n^{p\delta}}\right| V\left(w_{n}\right)\right]\right]\\ & \le
		\EE^{(\alpha)} \left[ F(\alpha + V(w_{n})) {n^{-p\delta}}\right] \leq c_{13}n^{-p\delta}.
	\end{align*}
	The claim follows by the Borel-Cantelli lemma.

% %
% % Take $1/p<\delta'<\delta$. We have $\PP^{(\alpha)}\mbox{-- a.s.}$,
% % $$\widehat{D}_n \le c_8D_{w_n}^{(\a)}\le R(\alpha+V(w_n))G_n^{(\alpha)}$$
% % By~\eqref{eq: upper},  $G_n^{(\alpha)}\leq_{a.a.}e^{n^{\delta'}}$ and by Lemma~\ref{lem: lil}, $R(\alpha+ V(w_n))\leq c_2(1+ V(w_n)+\alpha) \leq_{a.a.} n $.
% % In conclusion
% % $$\widehat{D}_n \le R(\alpha+V(w_n))G_n^{(\alpha)} \leq_{a.a.} ne^{n^{\delta'}}  \leq_{a.a.}e^{n^{\delta}}\qquad \PP^{(\alpha)}\mbox{-- a.s.}.$$
\end{proof}

\begin{lem}\label{lem:lowerd}
	Assume \eqref{eq:boundarycase}. There exists $\eta> 0$ such that for all sufficiently large $\alpha \geq 0$
	\begin{equation*}
		\max_{n^3\leq j < (n+1)^3 }\widehat{D}_j \geq_{a.a.} \eta \qquad \PP^{(\alpha)}\mbox{-- a.s.}
	\end{equation*}
\end{lem}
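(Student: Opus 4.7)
The plan is to apply the Borel--Cantelli lemma to the events
$$F_n = \Big\{\max_{n^3 \le j < (n+1)^3} \widehat{D}_j < \eta\Big\},$$
aiming for a bound of the form $\PP^{(\alpha)}(F_n) \le e^{-cn^2}$, which is summable since the window has length $(n+1)^3 - n^3 \sim 3n^2$.

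The first ingredient is conditional independence. Let $\mathcal{S}$ denote the $\sigma$-algebra generated by the spine $(w_k)_{k\ge 0}$, the spine positions $(V(w_k))_{k\ge 0}$, and the positions of all children of every spine vertex. By the spinal decomposition under $\PP^{(\alpha)}$, conditionally on $\mathcal{S}$ the subtrees rooted at the non-spine children of spine vertices are independent, each of them (shifted by its root position) distributed as a branching random walk under $\PP$. In particular, for any non-spine child $x$ the quantity $D_x$ is conditionally independent of $\mathcal{S}$ with the law of $D$ under $\PP$, and since $\widehat{D}_n$ depends only on the subtrees rooted at $C(w_n)\setminus\{w_{n+1}\}$, which are disjoint across $n$, the family $(\widehat{D}_n)_{n\ge 0}$ is conditionally independent given $\mathcal{S}$.

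Next I introduce a good event. Choose $\eta_0>0$ with $p_0:=\PP(D\ge\eta_0)>0$ (possible since $\PP^*(D>0)=1$), fix $K>0$, and set $\eta:=c_8 e^{-K}\eta_0$. Define the $\mathcal{S}$-measurable event
$$E_n = \big\{\exists\, x\in C(w_n)\setminus\{w_{n+1}\}\ \text{with}\ V(x)-V(w_n)\le K\big\}.$$
On $E_n$, a witness $x_n$ gives $\widehat{D}_n\ge c_8 e^{-K}D_{x_n}$, hence $\PP^{(\alpha)}(\widehat{D}_n\ge\eta\mid \mathcal{S})\ge p_0\ind_{E_n}$. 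The main obstacle is to establish that for $K$ large and $\alpha$ sufficiently large there exists $p_1>0$ with
$$\PP^{(\alpha)}(E_n\mid V(w_j),\, j\le n)\ge p_1 \quad \text{a.s., for every } n.$$
This is done from the explicit spine offspring law: conditionally on $V(w_n)=v$, the displacements $(V(x)-v : x\in C(w_n))$ have density $D_1^{(\alpha+v)}/R(\alpha+v)$ with respect to the law of $\Theta$ under $\PP$, and $w_{n+1}$ is selected among the children with weight proportional to $R(\alpha+V(x))e^{-V(x)}\ind_{\{\underline V(x)\ge-\alpha\}}$. Combining supercriticality (so $\PP(\#\Theta\ge 2)>0$), the asymptotic $R(u)\sim c_0 u$ (which makes the size-bias $D_1^{(\alpha+v)}/R(\alpha+v)$ stabilise toward $W_1=\sum_{|x|=1}e^{-V(x)}$ as $\alpha+v\to\infty$), and the normalisation $\EE[W_1]=1$ from \eqref{eq:boundarycase} yields the claim; taking $\alpha$ large is precisely what absorbs the boundary regime where $V(w_n)+\alpha$ is small.

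To conclude, observe that under $\PP^{(\alpha)}$ the offspring configurations at distinct spine levels are drawn independently given $(V(w_k))_{k\ge 0}$ (by the Markov property of the conditioned walk), so the events $(E_n)_n$ are conditionally independent given $(V(w_k))_{k\ge 0}$, each with conditional probability $\ge p_1$. Hoeffding's inequality applied to $N_n := \#\{j\in[n^3,(n+1)^3): E_j\}$ yields
$$\PP^{(\alpha)}\Big(N_n\le \tfrac{3 p_1}{2}n^2\Big)\le e^{-c\, n^2}.$$
On the complementary event, conditional independence of $(\widehat{D}_j)$ given $\mathcal{S}$ gives
$$\PP^{(\alpha)}(F_n\mid \mathcal{S}) \le \prod_{j=n^3}^{(n+1)^3-1}(1-p_0\ind_{E_j}) = (1-p_0)^{N_n} \le e^{-c' n^2}.$$
Hence $\PP^{(\alpha)}(F_n)\le e^{-c'' n^2}$, which is summable, and Borel--Cantelli finishes the proof.
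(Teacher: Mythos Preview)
Your overall architecture---Borel--Cantelli over blocks of length $\sim 3n^2$, with a geometric/exponential bound coming from many ``good'' spine generations---matches the paper's. The paper also reduces to a one-step estimate $\PP^{(\beta)}[\widehat D_0>\eta]\ge\delta$ and then multiplies over the window. The difference is that the paper runs a direct induction on the joint event $\{\max_i\widehat D_i<\eta,\ \min_i V(w_i)\ge M\}$, while you separate the two layers (first count good spine generations via Hoeffding, then use conditional independence of the $\widehat D_j$). Both routes are fine in principle.

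There is, however, a genuine gap in your argument at the point where you assert
\[
\PP^{(\alpha)}\big(E_n\mid V(w_j),\,j\le n\big)\ \ge\ p_1\quad\text{a.s., for every }n.
\]
This conditional probability equals $g(\alpha+V(w_n))$ with $g(\beta)=\PP^{(\beta)}[E_0]$, and your justification via $R(u)\sim c_0u$ only gives $g(\beta)\ge p_1$ for \emph{large} $\beta$. The sentence ``taking $\alpha$ large is precisely what absorbs the boundary regime where $V(w_n)+\alpha$ is small'' is incorrect: under $\PP^{(\alpha)}$ the spine is only constrained to stay above $-\alpha$, and for any $\alpha$ it visits the strip $[-\alpha,-\alpha+b)$ with positive probability. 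In fact one can build point processes satisfying \eqref{eq:boundarycase} for which $g(\beta)=0$ on an interval $[0,b)$: take $\Theta$ that with probability $p$ places a single child at some $a>0$ and with probability $1-p$ places two children at $-b<0$ (parameters tuned to the boundary case). For $\beta<b$ the biasing density $D_1^{(\beta)}/R(\beta)$ vanishes on the two-child configuration, so under $\PP^{(\beta)}$ there is a.s.\ a single child and hence no non-spine child at all---$E_0$ fails for every $K$. Thus your a.s.\ lower bound cannot hold.

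The paper closes exactly this hole by splitting off the event $\{\min_{k^3\le i<(k+1)^3}V(w_i)<M\}$ and bounding its probability by $C\log k/k^{3/2}$ via Lemma~\ref{lem:conditionedkozlov}; on the complement one has $\alpha+V(w_i)>\alpha+M>2M$ throughout the window, and only then is the one-step bound $\PP^{(\beta)}[\widehat D_0>\eta]\ge\delta$ applied. Your proof can be repaired in the same way: restrict the Hoeffding step to the event $\{\min_j V(w_j)\ge M\}$, where your asymptotic argument does yield a uniform $p_1$, and control the complementary event with Lemma~\ref{lem:conditionedkozlov}.
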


\begin{proof}
	By our assumptions we can infer existence of $M$, $\delta_0$, $\delta_1$, $\delta_2 >0$ such that
	\begin{itemize}
		\item $\PP\left[ \mbox{there are $x\not=y$ such that $|x|=|y|=1$ and } V(x),V(y) \in (-M,M) \right] \geq \delta_0$
		\item $\PP[D > \delta_1] > \delta_2$
	\end{itemize}
	Note that for $\alpha > 2M$, by \eqref{eq:renineq}, we have
	\begin{equation*}
		\frac{R(\alpha-M)}{R(\alpha)} \geq \frac{c_1}{2c_2} =: \delta_3.
	\end{equation*}
	The following claim holds true with $\eta = \delta_1e^{-M}$, $\delta = \delta_0\delta_2\delta_3 e^{-M}$
	\begin{equation}\label{eq: st}
		\PP^{(\alpha)} \big[\widehat{D}_0> \eta \big] \geq \delta
	\end{equation}
	for any $\alpha > 2M$. Indeed,
	\begin{align*}
		\PP^{(\alpha)}\big[ \widehat{D}_0 > \eta \big] & =  \PP^{(\alpha)}\bigg[\sum_{\substack{|x|=1,\\w_{1}\neq x}} e^{-V(x)} D_x > \eta \bigg]\\
			& \geq \PP^{(\alpha)} \left[ e^{-V(x)} D_x > \eta \mbox{ for some } x \neq w_{1}, \: |x|=1 \right] \\
			 & \geq  \PP^{(\alpha)} \left[ e^{-V(x)} > e^{-M}, \: D_x> \delta_1  \mbox{ for some } x \neq w_{1}, \: |x|=1 \right] \\
			& =  \EE^{(\alpha)}\left[\PP^{(\alpha)} \left[\left.  V(x)< M, \: D_x> \delta_1 \mbox{ for some } x \neq w_{1}, \: |x|=1 \right| \Fcal_1  \right] \right] \\
			 & \geq  \EE^{(\alpha)}\left[ \ind_{\left\{ V(x)< M \mbox{ \tiny for some } x \neq w_{1}, \: |x|=1 \right\}} \PP\left[ D > \delta_1 \right]\right] \\
			 & \geq  \delta_2\PP^{(\alpha)} \left[V(x)< M, \mbox{ for some } x \neq w_{1}, \: |x|=1 \right].
	\end{align*}
	The remaining probability can be bounded from below by
	\begin{align*}
		\PP^{(\alpha)}\bigg[\bigcup_{\substack{|x|=1,\\w_{1}\neq x}} V(x)< M  \bigg] & =  \EE \bigg[ \sum_{|y|=1} \ind_{\left\{ \exists x \neq y \: V(x) < M \right\}} \frac{R(\alpha + V(y))}{R(\alpha)}  e^{-V(y)} \ind_{\{{V}(y) \geq -\alpha  \}} \bigg] \\
				& \geq  \delta_3\EE \bigg[ \sum_{|y|=1} \ind_{\left\{\exists x \neq y \: V(x)<M \right\}} e^{-V(y)}\ind_{\{V(y)\in (-M, M)\}}\bigg] \\
				& \geq  \delta_3e^{-M}\PP\left[ \exists x \neq y \: V(x),V(y) \in (-M,M) \right]\\  &\geq \delta_0\delta_3e^{-M}.
	\end{align*}
	This proves \eqref{eq: st}.

\medskip

	At the end of the proof of this lemma, we will invoke the Borel-Cantelli lemma, so first we consider the sequence
	\begin{align*}
		\PP^{(\alpha)} \left[\min_{k^3 \leq i < (k+1)^3} \widehat{D}_i < \eta \right]  & \leq   \PP^{(\alpha)} \left[ \min_{k^3\leq i < (k+1)^3} V\big( w_i\big) <M \right] \\
				&  +\; \PP^{(\alpha)} \left[\max_{k^3 \leq i < (k+1)^3} \widehat{D}_i < \eta; \: \min_{k^3\leq i < (k+1)^3} V\big( w_i\big) \geq M \right].
	\end{align*}
	From here, we will use an induction argument in order to show that for any $k,l \in \NN$ and $\a>2M$
	\begin{equation}\label{eq:indmaxd}
		\PP^{(\alpha)} \left[\max_{k \leq i < l} \widehat{D}_i < \eta; \: \min_{k\leq i < l} V\big( w_i\big) \geq M \right] \leq (1-\delta)^{l-k+1}.
	\end{equation}
	Put
	\begin{equation*}
		A(k,l)=\left\{\max_{k \leq i < l} \widehat{D}_i < \eta; \: \min_{k\leq i < l} V\left( w_i\right) \geq M \right\}.
	\end{equation*}
	For $l =k$ \eqref{eq:indmaxd} follows from \eqref{eq: st}. For bigger $l$ we argue that
	\begin{align*}
		\PP^{(\alpha)} [A(k,l+1)]& =  \EE^{(\alpha)} \left[ \PP^{(\alpha)} \left[A(k,l+1)\left| V\big(w_{i}\big), \widehat{D}_i, i\leq l, V\big(w_{l+1}\big) \right. \right]  \right] \\
			& =  \EE^{(\alpha)} \left[\ind_{A(k,l)}\ind_{\left\{V\big(w_{l+1}\big) >M \right\} } \PP^{(\alpha)} \left[\left. \widehat{D}_{l+1} \leq \eta \right|  V\left(w_{l+1}\right)\right]  \right] \\
			& =  \EE^{(\alpha)} \left[ \ind_{A(k,l)}\ind_{\left\{V\left(w_{l+1}\right) >M \right\} } \PP^{\left(\alpha+V\left(w_{l+1}\right)\right)} \left[\widehat{D}_{1} \leq \eta \right] \right] \\
			& \leq  \PP^{(\alpha)} \left[ A(k,l) \right] (1-\delta).
	\end{align*}
	Finally, from \eqref{eq:indmaxd} and Lemma~\ref{lem:conditionedkozlov} we infer that for sufficiently large $k$
	\begin{equation*}
		\PP^{(\alpha)} \left[  \max_{k^3 \leq j < (k+1)^3}\widehat{D}_j \leq \eta \right] \leq (1-\delta)^{3k^2}+C\log k / k^{3/2}.
	\end{equation*}
	From this the claim follows by Borel-Cantelli lemma.
\end{proof}

\section{Proofs of Theorems \ref{thm:mainresult1} and \ref{thm:mainresult2}} \label{sec: proofs}
	
\begin{proof}[Proof of Theorem~\ref{thm:mainresult1}] \noindent
	{\bf Step 1. Upper bound.} The upper bound was already proved in Corollary \ref{cor: up}

\medskip

	\noindent
	{\bf  Step 2. Lower bound.}
	In view of \eqref{eq:dec}
	$$
		\mu\big(B(w_n)\big) \ge e^{-V(w_n)}\widehat D_n.
	$$
	Lemma \ref{lem:envelope} ensures that for any $\psi$ such that $\int^{\8} \psi(t)dt/t = \8$ we have
	\begin{equation} \label{eq: vwn}
		V(w_n)<_{i.o.} \frac 12 \sqrt n\psi(n)\qquad \P^{(\a)} \mbox{ a.s.,}
	\end{equation}
	that implies
	\begin{equation}\label{eq: evv}
		e^{-V(w_n)}>_{i.o.} e^{-\frac 12 \sqrt n\psi(n)}\qquad \P^{(\a)} \mbox{ a.s.}
	\end{equation}
	The idea of the proof is the following. We will prove that for some large constant $M$, $\widehat D_n > e^{-M}$, $\P^{(\a)}$ i.o., moreover this estimate holds i.o. on the set where
	\eqref{eq: evv} is satisfied. For a rigorous argument choose $\eps>0$  and $M$ such that
	$$
		\P\big[{\text{there are }x\neq y, |x|=|y|=1 \text{ and }  -M<V(x),V(y)<M}\big] > \eps.
	$$
	and $$\P[D>1]\ge\eps.$$
	Next, we define a~sequence of stopping times
	$$
		T_0 = 0, \qquad T_k = \inf\big\{ n> T_{k-1}: 0<V(w_n)< \sqrt n\psi (n)/2 \big\},
	$$
	where the corresponding filtration is given by $\Fcal_n^* = \sigma(  \{V(x)  : x\in\T\setminus\T_{w_{n}}\cup \{w_n\}\})$, where
	$\mathbb{T}_{w_{n}} \subseteq \mathbb{T}$ denotes the tree rooted at $w_{n}$.
	Because of \eqref{eq: vwn}, these stopping times are finite a.s. For an event
	\begin{multline*}
		A_{k} = \big\{|{V}(x_0)-V(w_{T_k})|<M \mbox{ and } D_{x_0} > 1  \mbox{ for some } x_0\in C(w_{T_k})\setminus \{w_{T_k+1}\}\big\},
	\end{multline*}
	  observe  that $A_{k-1} \in \Fcal_{T_k}^*$. For any $\beta>0$ we set $$ H(\beta)=\PP^{(\beta)}[A_0]$$ and observe that for $\beta >2M$ and  $\delta=\frac{c_1}{2c_2}$ we have
	 \begin{align*}
		\PP^{(\beta)}[A_0] &\ge\EE^{(\beta)}[\PP^{(\a)}[\exists |x_0|=1,x_0\neq w_1,\: |{V}(x_0)|<M ]]\P[D>1]\\
		&\ge \eps\EE[\sum_{|x|=1}\frac{R(V(x)+\beta)}{R(\beta)}\ind_{\{V(x)+\beta\ge0\}}e^{-V(x)}\ind_{\{\exists_{x_0\neq x}  \: |V(x_0)|<M\}}]\\
		&\ge\eps\EE[\sum_{|x|=1}\delta\frac{1+V(x)+\beta}{1+\beta}\ind_{\{|V(x)|<M\}}e^{-M}\ind_{\{\exists_{x_0\neq x}  \: |V(x_0)|<M\}}]\\
		&\ge\eps\tfrac{\delta }{2}e^{-M}\PP[\exists{x_0\neq x, |x_0|=|x|=1},\:|V(x_0)|<M,|V(x)|<M]\\
		&\ge\tfrac{\delta }{2}e^{-M}\eps^2.
	\end{align*}
	Since for $\alpha>2M$
	\begin{align*}
		\PP^{(\alpha)}[A_k|\Fcal_{T_k}^*]=\EE^{(\a)}[ H(\alpha+V(w_{T_k})) |\Fcal_{T_k}^*]]\ge\tfrac{\delta }{2}e^{-M}\eps^2,
	\end{align*}
	 the conditioned Borel-Cantelli lemma (see e.g. Corollary 5.29 in Breiman \cite{Breiman}) $\P^{(\a)}[A_{k}\ \mbox{i.o.}]=1$. Hence
	$$
		\mu\big( B(w_n)\big)\ge_{i.o} c_8 e^{-\frac 12 \sqrt n \psi(n)} e^{-\alpha} e^{-M} \ge_{i.o} e^{-\sqrt n \psi (n)}\qquad \P^{(\a)} \mbox{ a.s.}
	$$
\end{proof}

\begin{proof}[Proof of Theorem \ref{thm:mainresult2}]

	{\bf Step 1. Lower bound.} As in the previous case it is sufficient to prove the result for $\P^{(\a)}$ a.e. spine $\{w_n\}$ and all sufficiently large $\a$. Take an arbitrary
		$\delta>0$. Then by Lemma \ref{lem: lil}
		\begin{equation*}
			V\big(w_n \big) \leq_{a.a.} (1+\delta/2) \sqrt{2\sigma^2n\log\log n }  \qquad \PP^{(\alpha)} \mbox{ a.s.}
		\end{equation*}
		From this we obtain
		\begin{equation*}
			\mu\big( B\big( w_n \big) \big) \geq_{a.a.}
			\sum_{j \geq n} e^{-(1+\delta/2)\sqrt{2\sigma^2j\log\log(j)}}\widehat{D}_j.
		\end{equation*}
		Now we use Lemma~\ref{lem:lowerd} and for the sequence $(k_n)_n$ such that $(k_n-1)^3 \leq n < k_n^3$ we write,
		since $\widehat{D}_j\ge 0$
		\begin{align*}
			\mu\big( B\big( w_n \big) \big)
				& \geq_{a.a.}  \bigg(\sum_{n\leq j < k_n^3} + \sum_{k_n^3\leq j \leq (k_n+1)^3} + \sum_{j > (k_n+1)^3} \bigg)
						e^{-(1+\delta/2)\sqrt{2\sigma^2j\log\log(j)}}\widehat{D}_j \\
				& \geq_{a.a.}  \sum_{k_n^3\leq j \leq (k_n+1)^3} e^{-(1+\delta/2)\sqrt{2\sigma^2j\log\log(j)}}\widehat{D}_j \\ & \geq_{a.a.}
						\eta e^{-(1+\delta/2)\sqrt{2\sigma^2(k_n+1)^3\log\log((k_n+1)^3)}}\\
				&\geq_{a.a.}  e^{-(1+\delta)^2\sqrt{2\sigma^2n\log\log n}}\;.
		\end{align*}
		This completes the first step.
\medskip

	\noindent
	{\bf Step 2. Upper bound.} Fix $\delta>0$. First we write { lower} estimates for the spine $\{V(w_k)\}$. Lemma \ref{lem: lil} gives
	\begin{equation}\label{eq:1}
		V(w_n) >_{i.o.} (1-\delta/8) \sqrt{2n \sigma^2 \log\log n}\qquad \P^{(\a)}\ \mbox{a.s.}
	\end{equation}
	Choosing $\psi(n) = 1/(\log n)^2$ in Lemma \ref{lem:envelope}  we obtain
	\begin{equation}\label{eq:2}
		V(w_n) >_{a.a.} \frac{3\sqrt n}{(\log n)^2}\qquad \P^{(\a)}\ \mbox{a.s.}
	\end{equation}
	We define a sequence of stopping times (this is a subsequence of the indices for which \eqref{eq:1} holds)
	\begin{align*}
		T_1 &= \inf\Big\{ n:\; V(w_n) > (1-\delta/8)\sqrt{2\sigma^2 n\log\log n}  \Big\},\\
		T_{k+1} &= \inf\Big\{ n \ge T_k(\log T_k)^5:\; V(w_n) > (1-\delta/8)\sqrt{2\sigma^2 n\log\log n}  \Big\}.\\
	\end{align*}
	In view of \eqref{eq:1} these stopping times are finite $\P^{(\a)}$ a.s. Denote
	$$
		A_{k+1} = \Big\{V(w_{n})\ge (1-\delta/4)\sqrt{2\sigma^2 T_k \log\log T_k}\ \mbox{ for }\ T_k < n \le T_k (\log T_k)^5\Big\}.
	$$
	We will prove  that $\P^{(\a)}[A_n \mbox{ i.o.}]=1$. Notice that, applying Lemma~\ref{lem: biggins} and \eqref{eq:renineq} we have
	\begin{align*}
		\P^{(\a)}\big[A_{k+1}| {\mathcal F}_{T_k}^*\big] &
				= \P^{(\a)}\Big[\min_{T_k < n \le T_k (\log T_k)^5} V(w_n) > (1-\delta/4)\sqrt{2\sigma^2 T_k \log\log T_k}\Big| V(w_{T_k})\Big]\\
				& \ge  \P^{(\a)}\Big[\min_{n> T_k  } V(w_n)>(1-\delta/4)\sqrt{2\sigma^2 T_k \log\log T_k}\Big| V (w_{T_k})\Big]\\
				&\ge \frac{{ R( \delta/8 \sqrt{2\sigma^2 T_k \log\log T_k})}}{R(\a + (1-\delta/8)\sqrt{2\sigma^2 T_k \log\log T_k})}
				> \frac{1}{1+\alpha}\delta.
	\end{align*}
	Since $A_k\in {\mathcal F}_{T_k}^*$,  the conditioned Borel-Cantelli lemma (Corollary 5.29 in Breiman \cite{Breiman}) implies that $\P^{(\a)}[A_k\mbox{ i.o.}]=1$. Therefore, by \eqref{eq:1}, \eqref{eq:2} and Lemma \ref{lem:upperd}, for $n = T_k$
	such that $A_k$ holds, we have
	\begin{align*}
		\mu\big(B(w_n) \big)  &\le_{i.o} \sum_{n\le k \le n(\log n)^5} e^{-(1-\delta/4)\sqrt{2\sigma^2 n\log\log n}} e^{k^{\delta}}+ \sum_{k > n(\log n)^5} e^{-\frac{3\sqrt k}{(\log k)^2}} e^{k^{\delta}}\\
			&\le_{i.o} n(\log n)^5 e^{-(1-\delta/4)\sqrt{2\sigma^2 n\log\log n}} e^{(n(\log n )^5)^{\delta}} + \sum_{k > n(\log n)^5} e^{-\frac{2\sqrt k}{(\log k)^2}} \\
			&\le_{i.o}  e^{-(1-\delta/2)\sqrt{2\sigma^2 n\log\log n}} + \bigg(\frac{2\sqrt{n(\log n)^5}}{(\log (n(\log n)^5))^2}\bigg)^4\times e^{-\frac{2\sqrt{n(\log n)^5}}{(\log (n(\log n)^5))^2}} \\
			&\le_{i.o}  e^{-(1-\delta)\sqrt{2\sigma^2 n\log\log n}}.
	\end{align*}
	This proves Theorem~\ref{thm:mainresult2}.
\end{proof}

\section*{Acknowledgment}

 The authors would like to thank two anonymous referees for their constructive suggestions that helped improving the presentation of this paper.

\bibliographystyle{plain}
\bibliography{bibliography}

\end{document}